\theoremstyle{plain}
\newtheorem{lem}{Lemma}
\newtheorem{theorem}{Theorem}
\theoremstyle{definition}
\newtheorem{Def}{Definition}
\theoremstyle{remark}
\newtheorem{remark}{Remark}
\newtheorem{example}{Example}
\newtheorem{conjecture}{Conjecture}
\newcommand{\tD}{\mathtt 2}
\newcommand{\tL}{\mathtt 1} 
\newcommand{\tO}{\mathtt 0}
\begin{document}

\title[Maximum order complexity in Zeckendorf base]{Maximum order complexity of the sum of digits function in Zeckendorf base and  polynomial subsequences}

\author{Damien Jamet \and Pierre Popoli \and
        Thomas Stoll 
}

\begin{abstract}
Automatic sequences are not suitable sequences for cryptographic applications since both their subword complexity and their expansion complexity are small, and their correlation measure of order 2 is large. These sequences are highly predictable despite having a large maximum order complexity. However, recent results show that polynomial subsequences of automatic sequences, such as the Thue--Morse sequence, are better candidates for pseudorandom sequences. A natural generalization of automatic sequences are morphic sequences, given by a fixed point of a prolongeable morphism that is not necessarily uniform. In this paper we prove a lower bound for the maximum order complexity of the sum of digits function in Zeckendorf base which is an example of a morphic sequence. We also prove that the polynomial subsequences of this sequence keep  large maximum order complexity, such as the Thue--Morse sequence. 

\end{abstract}

\maketitle


\section{Introduction}

Feedback shift register (FSR) sequences are used for many cryptographic applications such as pseudorandom number generators for stream cipher cryptosystems, see~\cite{golomb1967}. A binary $n$-stage feedback shift register (FSR) is a mapping $\mathfrak{F}$ from $\mathbb{F}_2^n$ to $\mathbb{F}_2^n$ of the form \begin{align*}
\mathfrak{F}: (x_0,x_1,\ldots,x_{n-1}) \mapsto (x_1,x_2,\ldots,x_{n-1},f(x_0,x_1,\ldots,x_{n-1})),
\end{align*}
where $f$ is a function from $\mathbb{F}_2^n$ to $\mathbb{F}_2$. Consider the binary sequence $\mathcal{S}=(s_i)_{i\geq 0}$ whose first $n$ terms are given and the remaining terms are uniquely determined by the recurrence relation \begin{align*}
s_{i+n}=f(s_i,\ldots,s_{i+n-1}), \quad i\geq 0.
\end{align*} The sequence $\mathcal{S}$ is called the output sequence of the FSR. An output sequence of a short FSR is considered weak in cryptographic applications.  In order to determine this shortness for an infinite sequence $\mathcal{S}$, Jansen~\cite{jansen1989,jansen1991} introduced the notion of $N$th \textit{maximum order complexity of} $\mathcal{S}$, denoted as $M(\mathcal{S},N)$, which is the length of the shortest FSR that generates the first $N$ elements of $\mathcal{S}$. If the mapping $\mathfrak{F}$ is a linear transformation, then the FSR is called a linear feedback shift register (LFSR). This leads to the notion of the \textit{linear complexity profile} of $\mathcal{S}$ denoted as $L(\mathcal{S},N)$. We have obviously $M(\mathcal{S},N)\leq L(\mathcal{S},N)$. The sequence $(M(\mathcal{S},N))_N$ is called \textit{the maximum order complexity profile} of $\mathcal{S}$ and $M(\mathcal{S})=\sup_{N\geq 1} M(\mathcal{S},N)$ is called \textit{the maximum order complexity} of $\mathcal{S}$.

A sequence $\mathcal{S}$ is said to have a perfect linear complexity profile if \begin{align*}
L(\mathcal{S},N) =\left\lceil \frac{n}{2} \right\rceil, \quad n\geq 1.
\end{align*} 

An explicit construction of sequences with perfect linear complexity profile is given in~\cite{AHN2020}. These are known under the name of apwenian sequences (see \cite{MW2021}). Diem~\cite{diem2012} observed that these sequences and sequences based on function expansion into expansion series can be efficiently computed from relatively short sequences. This leads to the notion of \textit{expansion complexity}, denoted as $E(\mathcal{S},N)$, see \Cref{Expansion_complexity} below for more details. It was proved by M\'erai, Niederreiter and Winterhof~\cite{merainiederreiterwinterhof2017} that the expansion complexity and the linear complexity of an infinite sequence satisfy $E(\mathcal{S},N)\leq L(\mathcal{S},N)+1$.

A natural question is to ask about the mutual relationship between the maximum order complexity and the expansion complexity. It is known that these two complexities have expected behavior $\log N$ for the maximum order complexity, see~\cite{jansen1989,niederreiterxing2014}, and $\sqrt{N}$ for the expansion complexity, see \cite[Theorem 6.2]{MW2021} and \cite{merainiederreiterwinterhof2017}. This suggests that maximum order complexity is smaller than the expansion complexity. However, the Thue--Morse sequence has a large maximum order complexity and a bounded expansion complexity. Thus in order to determine the unpredictability of a sequence, both the expansion complexity and the maximum order complexity are useful and needed. 

In recent years, research focused in particular on automatic sequences, i.e. sequences that are generated by a deterministic finite automaton. These sequences are not suitable for cryptographic applications as we will state later. Nevertheless, their pseudorandom behavior changes radically when the sequence is rarefied along special subsequences such as polynomial subsequences. One can extend the definition of automatic to morphic sequences which is an again larger class. 

The aim of the present article is to study pseudorandomness of some morphic sequences and its polynomially rarefied subsequences. We first introduce several measures of complexity (\Cref{Measures_of_complexity}) and illustrate their behavior with a simple example. We then introduce automatic and morphic sequences (\Cref{Automatic-morphic}) and give an overview about the results known for their measures of complexity. We then indicate our main results of this paper (\Cref{Main_results}).

\subsection{Measures of complexity} \label{Measures_of_complexity} Let us first introduce some measures of complexity that we will focus on in the present paper. 
\begin{Def}[Maximum order complexity]
Let $N$ be a positive integer with $N \geq 2$, and $\mathcal{S}=\left(s_n \right)_{n\geq 0}$ be a sequence over $\{0,1\}$ with $(s_0,\ldots,s_{N-2})\neq (a,\ldots,a)$ for $a=0$ or $1$. The  \emph{$N$th maximum order complexity} $M(\mathcal{S},N)$ is the smallest positive integer $M$ such that there is a polynomial $f(x_1,\ldots,x_M) \in \mathbb{F}_2[x_1,\ldots,x_M]$ with \begin{align*}
s_{i+M}=f(s_i,\ldots,s_{i+M-1}), \quad 0\leq i\leq N-M-1.
\end{align*}
If $s_i=a$ for $i=0,\ldots,N-2$, we define $M(\mathcal{S},N)=0$ if $s_{N-1}=a$ and $M(\mathcal{S},N)=N-1$ else. 
\end{Def}

A sequence with small maximum order complexity is not suitable for cryptographic applications. Indeed, such a sequence can be built with relatively short blocks of consecutive terms. However, a sequence with large maximum order complexity can still be predictable, as it is known for the Thue--Morse sequence, see~\cite{mauduitsarkozy1998,sunwinterhof2019}. Note also that the largest possible order of magnitude of $M(\mathcal{S},N)$ is $N$ and the expected value of $M(\mathcal{S},N)$ is $\log N$, see~\cite{jansen1989,niederreiterxing2014}.

The maximum order complexity has been studied by several authors, for general results see~\cite{isikwinterhof2017,jansen1989,jansen1991,niederreiterxing2014,topuzogluwinterhof2007}  or~\cite{popoli2020,sunwinterhof2019bis,sunwinterhof2019,sunzenglin2020} for applications to some particular sequences such as the Thue--Morse sequence or the Rudin--Shapiro sequence. From a computational perspective, Jansen \cite[Proposition 3.17]{jansen1989} showed how Blumer's DAWG (Direct Acyclic Weighted Graph) algorithm~\cite{blumerblumerhausslerehrenfeuchtchenseiferas1985} can be used to compute the maximum order complexity in linear time and memory. In the last section of this paper, we use DAWG algorithm to formulate some conjectures (Section~\ref{secConjecture}).

\medskip

Diem~\cite{diem2012} introduced the expansion complexity of a sequence as follows. 

\begin{Def}[Expansion complexity] \label{Expansion_complexity}
Let $N$ be a positive integer, $\mathcal{S}=\left(s_n \right)_{n\geq 0}$ be a sequence over $\{0,1\}$ and $G_{\mathcal{S}}(x)$ its generating function defined by \begin{align*}
G_{\mathcal{S}}(x)=\sum \limits_{i\geq 0}s_ix^i.
\end{align*} The \emph{$N$th expansion complexity} $E(\mathcal{S},N)$ is defined as the least total degree of a nonzero polynomial $h(x,y) \in \mathbb{F}_2[x,y]$ with \begin{align*}
h(x,G_{\mathcal{S}}(x)) \equiv 0 \pmod{x^N},
\end{align*} 
if $s_0,\ldots,s_{N-1}$ are not all equal to $0$, and $E(\mathcal{S},N)=0$ otherwise. 
\end{Def}

A truly random sequence has expected value $E(\mathcal{S},N)$ of order $N^{1/2}$, see \cite{MW2021}.

Binary, and more generally, $p$-ary automatic sequences are not good pseudorandom sequences since their expansion complexity satisfies $E(\mathcal{S},N)<+\infty$ by Christol's theorem, see~\cite{christol1979}. In fact, automatic sequences generated by a finite automaton do not have sufficiently many different factors of a fixed length. 

The subword complexity, denoted by $p_{\mathcal{S}}(k)$, counts how many different subwords of fixed length $k$ the sequence contains. It can again serve as a measure of pseudorandomness. A sequence $\mathcal{S}$ over $\{0,1\}$ is \textit{normal} if for every $k\geq 1$ and for any block of length $(b_0,\ldots, b_{k-1})$ $\in \{0,1\}^k$, we have \begin{align*}
\lim_{N \to \infty } \frac{1}{N}\,\text{Card}\{i<N:\; s_i=b_0,\ldots,s_{i+k-1}=b_{k-1} \}=\frac{1}{2^k}.
\end{align*}

One can associate to a  binary sequence a \textit{symbolic dynamical system} based on the shift on $\{0,1\}^{\mathbb{N}}$ . The \textit{topological entropy} of such a dynamical system is $\lim_{k\rightarrow +\infty} \frac{\log_2 p_{\mathcal{S}}(k)}{k}$, where $\log_2$ denotes the base 2-logarithm. A sequence with $0$ topological entropy is said to be \textit{deterministic} and it should not provide a pseudo-random sequence since such a sequence has a small subword complexity, see~\cite{queffelec1987}.

\subsection{Automatic and morphic sequences}\label{Automatic-morphic}

Let $k\geq 2$ and $\Sigma$ be a finite alphabet. We denote by $\Sigma^*$ the set of all finite or infinite words over $\Sigma$. A morphism $f$ is a mapping $f:\Sigma^* \rightarrow \Sigma^*$ satisfying $f(uv) = f(u)f(v)$ for all words $u$ and $v$. A morphism is said to be \textit{$k$-uniform} if $|f(x)|=k$ for all $x\in \Sigma$, where $|x|$ is the length of the word $x$. If there is $b \in \Sigma$ such that $f(b)$ starts with $b$, i.e. $f(b)=bu$ for some $u\in \Sigma^{*}$, we say that $f$ is \textit{$b$-prolongable}. In this case, let us denote $f^{\omega}(b)$ the fixed point of $f$ given by $f^{\omega}(b)=bf(u)f^2(u)\ldots$

\begin{Def}[Automatic and morphic sequences] Let $\mathcal{S}$ be a sequence over an alphabet $\Sigma$. $\mathcal{S}$ is \textit{morphic} if $\mathcal{S}=\pi ( f^{\omega}(b))$ where $f:\Sigma^{*} \rightarrow \Sigma^{*}$ is a $b$-prolongeable morphism and $\pi:\Sigma\rightarrow \Delta$ is a morphism, named coding. $\mathcal{S}$ is said \textit{$k$-automatic} if $f$ is $k$-uniform. 
\end{Def}

For equivalent definitions of  automatic sequences, we refer to~\cite{alloucheshallit2003}. An emblematic example of  automatic sequence is the Thue--Morse sequence, or Prou\-het--Thue--Morse sequence, independently introduced by Thue~\cite{Thue1912}, Morse~\cite{morse1921} and Prouhet sixty years before~\cite{prouhet1851}  . This sequence appears in many different fields of mathematics, see~\cite{mauduit2001}, and is still an important object of recent work. 

\begin{Def}[Thue--Morse sequence]
For an integer $n\geq 0$, we write 

$n=\sum_{i \geq 0}\varepsilon_i2^i$ with $\varepsilon_i \in \{0,1\}$ for all $i$ and $(n)_2=\cdots\varepsilon_1 \varepsilon_0$. The binary sum-of-digits of $n$ equals $s_2(n)=\sum_{ i \geq 0} \varepsilon_i$. The \textit{Thue--Morse sequence} $\mathcal{T}=(t(n))_{n\geq 0}$ is defined by $t(n)=s_2(n) \bmod 2$. 
\end{Def}

The Thue--Morse sequence is $2$-automatic and the corresponding morphism is \begin{align*} f:\left\{
    \begin{array}{lll}
         0 \mapsto 01 \\  1 \mapsto 10. \\ 
    \end{array}
\right. 
\end{align*}

The Thue--Morse sequence has a large maximum order complexity, see~\cite{sunwinterhof2019}, but this sequence is far from being pseudorandom. Indeed, it is well-known that $E(\mathcal{T},N)\leq 5$ for all $N$ since $h(x,y)=(x+1)^3y^2+(x+1)^2y+x$ satisfies $h(x,G_{\mathcal{T}}(x))=0$ where $G_{\mathcal{T}}(x)$ is the generating function of $\mathcal{T}$, see \cite[Example 12.1.12]{alloucheshallit2003}. Also, its subword complexity is small, $p_{\mathcal{T}}(k)\ll k$ (see \cite[Corollary 10.3.2]{alloucheshallit2003} for a general result for all automatic sequences). 

The behavior of this sequence regarding the defined pseudorandomness measures changes when this sequence is rarefied along specific subsequences. Drmota, Mauduit et Rivat~\cite{drmotamauduitrivat2019} showed that the Thue--Morse sequence along squares is normal and Moshe~\cite{moshe2007} showed that the polynomial subsequences of degree $d\geq 2$ of the Thue--Morse sequence have an exponential subword complexity. Sun and Winterhof~\cite{sunwinterhof2019bis} and Popoli~\cite{popoli2020} showed that the maximum order complexity of the Thue--Morse sequence along polynomial subsequences remains comparatively large. 
A large maximum order complexity is desired, however, it should be noted that it should be not too large since the correlation measure of order 2 gets large in that case (see \cite[Proposition 3.1]{jansen1989}).

Furthermore, these polynomial subsequences are no longer automatic sequences, see \cite[Theorem 6.10.1]{alloucheshallit2003}, and their expansion complexity is no longer bounded by Christol's theorem. These statements mean that the polynomial subsequences of the Thue--Morse sequence are better candidates for cryptographic applications than the original sequence. 

In order to generalize the investigation to other morphic sequences and sum of digits function, let us introduce the Zeckendorf base sum of digits function, which is related to the Fibonacci sequence (see~\cite[Theorem 3.8.1]{alloucheshallit2003},~\cite{zeckendorf1972}). 

\begin{Def}[Zeckendorf base]
Let $\mathcal{F}=(F_n)_{n\geq 0}$ be the Fibonacci sequence with initial values $F_0=0$, $F_1=1$ and $F_{n+2}=F_{n+1}+F_n$ for all $n\geq 0$. Let us denote $\varphi=(1+ \sqrt{5})/2$ the golden ratio. Each integer $n$ can be represented uniquely by \begin{align*} n=\sum_{i \geq 0} \varepsilon_i(n) F_{i+2},
\end{align*} with $\varepsilon_i(n) \in \{0,1\}$ and $\varepsilon_i(n)\varepsilon_{i+1}(n)=0$ for all $i\geq 0$. 
\end{Def}

Since $F_n=\lfloor \frac{\varphi^n}{\sqrt{5}}+\frac{1}{2} \rfloor$ for $n\geq 0$, the index $n(a)$ of the largest Fibonacci number that is not greater than an integer $a>1$ is $\lfloor \frac{\log(a \sqrt{5}+1/2)}{\log(\varphi)} \rfloor$. We call $n(a)-1$ the \textit{length} of $a$.

Thus we can define the sum of digits function in Zeckendorf base by  $s_Z(n)=\sum_{i \geq 0} \varepsilon_i(n)$, see~\cite{drmotamullnerspiegelhofer2018,shallit2020,stoll2013} or sequence A007895 in \textit{On-Line Encyclopedia of Integer Sequences} (OEIS) for discussions on this sequence. We denote by $\mathcal{S}_Z$ the sequence defined by $\mathcal{S}_Z=\left(s_Z(n)\mod 2\right)_{n\geq 0}$, an analog of the Thue--Morse sequence. It is known that $\mathcal{S}_Z$ is a morphic sequence, see for example \cite[p.14]{bruyere1995} or \cite[Examples 7.8.2, 7.8.4]{alloucheshallit2003}. An example of the corresponding morphism and coding is $$f:\left\{
    \begin{array}{lll}
         a \mapsto ab \\  b \mapsto c \\ c \mapsto cd \\ d \mapsto a
    \end{array}
\right.  \quad \text{and} \quad  g:\left\{
    \begin{array}{lll}
         a \mapsto 0 \\ b \mapsto 1 \\ c \mapsto 1 \\ d \mapsto 0.
    \end{array}
\right. $$
Thus, the sequence modulo $2$ is obtained by iterating $f$ on the letter $a$ and recoding with $g$ : $\mathcal{S}_Z=\tO\tL\tL\tL\tO\tL\tO\tO\tL\tO\tO\tO\tL\tL\tO\tO\tO\tL\tO\tL\tL\ldots$ This sequence is not automatic, see \cite[Remark 1]{drmotamullnerspiegelhofer2018}. Indeed the authors show that the $k$-kernel is infinite for every $k$, see~\cite{alloucheshallit2003} for more details on equivalent definitions of automatic sequences. This implies that the expansion complexity of $\mathcal{S}_Z$ is not bounded, since this sequence is not automatic, and motivates our study of this sequence. In fact, morphic non-automatic sequences are better candidates for cryptographic applications regarding this complexity. However morphic sequences have subword complexity at most bounded by a quadratic function, see \cite[Corollary 10.4.9]{alloucheshallit2003}, and their associated dynamical system has $0$ topological entropy, see \cite[Corollary V.20]{queffelec1987} for an alternative proof. 

\begin{example}[Carry propagations in Zeckendorf base] \label{Zeckendorf}
In Zeckendorf base, the carry propagations work in a different way than in the usual $q$-base with $q\in \mathbb{N}$ and $q\geq 2$. Indeed, since we cannot have two consecutive $\tL$-bits, the carry is ``transversal'' by the Fibonacci recurrence relation and if we have two $\tL$-bits on the same column we use the formulas $2F_j=F_{j+1}+F_{j-2}$ for all $j \geq 2$ in order to get an admissible expansion. For example, $5=F_5$, $6=F_5+F_2$ and $5+6=11=F_6+F_4$ as shown by the following calculation:\[\begin{tabular}{ccccccll}
& & $\tL$ & $\tO$ & $\tO$ & $\tO$\\
+& & $\tL$ & $\tO$ & $\tO$ & $\tL$\\ \hline
=&&$\tD$ & $\tO$ & $\tO$ & $\tL$ \\ \hline
=&$\tL$&$\tO$ & $\tO$ & $\tL$ & $\tL$ \\ \hline
=& $\tL$&$\tO$ & $\tL$ & $\tO$ & $\tO$
\end{tabular}\]

In this kind of calculation, we first do not take into consideration the restriction of non-adjacent terms then we normalize to obtain the Zeckendorf representation with non-adjacent terms. This simple example shows that the carry propagation is on both sides of the expansion since $2F_j=F_{j+1}+F_{j-2}$ for all $j\geq 2$. We will see later that this specific carry propagation, compared to the usual $q$-base carry propagation, will play an important role in our investigation. 
\end{example}

As the sum of digits function for the usual $q$-base,  $s_Z$ is a subadditive function, i.e. for any integers $n_1,n_2\geq 0$ we have $s_Z(n_1+n_2)\leq s_Z(n_1)+s_Z(n_2)$, see \cite[Proposition 1]{stoll2013}. Consider the Zeckendorf expansion of $n_1$ and $n_2$,\begin{align*}
n_i=\sum \limits_{j=k_i}^{K_i}\varepsilon_j^{(i)}F_{j+2}, \qquad i=1,2;
\end{align*} with $\varepsilon_{k_i}^{(i)}=\varepsilon_{K_i}^{(i)}=1$. Without loss of generality suppose that $k_1\leq k_2$. We say that $n_1$ and $n_2$ are \textit{non-interfering} if $k_2-K_1\geq 2$. This means that non-interfering integers have digital blocks that do not overlap and no normalization after the addition has to be executed. We therefore have
\begin{equation}\label{noninterFibo}
s_Z(n_1+n_2)=s_Z(n_1)+s_Z(n_2).
\end{equation}

\begin{remark}\label{remark} For usual $q$-base we have an analogous definition. Let $a,b$ be integers such that $a<q^{\ell}$ for some $\ell\geq 0$, thus we have the identity \begin{align} \label{q-base}
s_q(a+q^{\ell}b)=s_q(a)+s_q(b),
\end{align}where $s_q$ denotes the sum of digits function in $q$-base. For Zeckendorf base there are two major differences with respect to the usual $q$-base. First, we have a transversal carry propagation and this translates into the condition $k_2-K_1\geq 2$ instead of $k_2-K_1\geq 1$ that we would have for the usual $q$-base. Secondly, the identity \eqref{q-base} is not true in general when we replace $q^{\ell}$ by $F_{\ell}$ and $s_q$ by $s_Z$. We will use  Lucas numbers to replace the usual shift $q^{\ell}$.
\end{remark}

\subsection{Main results}\label{Main_results}

In the following we will determine a lower bound for the maximum order complexity of $\mathcal{S}_Z=(s_Z(n) \mod 2)_n$ and for $\mathcal{S}_{Z,P}=(s_Z(P(n)) \mod 2)_n$ with $P \in \mathbb{Z}[X]$ a monic polynomial such that $P(\mathbb{N}_0) \subset \mathbb{N}_0$. In what follows, we use Vinogradov's notation $f\ll g$ is defined as $|f|\leq c|g|$ for some $c>0$.

\begin{theorem} \label{Main_theorem_linear}
There exists $N_0>0$ such that for all $N>N_0$ we have \begin{align*}
M(\mathcal{S}_Z,N) \geq  \frac{1}{\varphi+\varphi^3}N+1.
\end{align*}

\end{theorem}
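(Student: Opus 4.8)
The plan is to reduce the lower bound to the existence of a repeated factor with two distinct continuations, and then to manufacture such a factor out of the arithmetic of the Zeckendorf expansion.

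First I would record the elementary observation behind every lower bound for $M(\mathcal{S},N)$: if there are indices $0\le p_1<p_2$ with $p_2+M\le N-1$ such that
\[
 s_Z(p_1+t)\equiv s_Z(p_2+t)\pmod 2 \quad (0\le t\le M-1), \qquad s_Z(p_1+M)\not\equiv s_Z(p_2+M)\pmod 2,
\]
then no feedback function $f$ of $M$ variables can reproduce the first $N$ terms, since it would be forced to output both values on the common window $(s_Z(p_1+t)\bmod 2)_{0\le t<M}$. Hence $M(\mathcal{S}_Z,N)\ge M+1$. The entire problem is therefore to produce, for each large $N$, a pair $(p_1,p_2)$ and a length $M$ as large as possible (namely $M\ge N/(\varphi+\varphi^3)$) exhibiting exactly this behaviour.

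To build the pair I would use the non-interference identity \eqref{noninterFibo} together with the Lucas shift announced in \Cref{remark}. Writing $L_\ell=F_{\ell+1}+F_{\ell-1}$, its Zeckendorf expansion has exactly two non-adjacent digits, so $s_Z(L_\ell)=2$ is \emph{even}; consequently, whenever an integer $n$ is non-interfering with $L_\ell$ one gets $s_Z(n+L_\ell)\equiv s_Z(n)\pmod 2$ from \eqref{noninterFibo}. I would thus take $p_2-p_1=L_\ell$ and choose $p_1$ to carry a block of zero digits around the positions $\ell-3,\dots,\ell-1$ occupied by $L_\ell$, so that for a long initial run of $t$ both $p_1+t$ and $p_1+L_\ell+t$ stay non-interfering with the inserted Lucas digits. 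Over that run the two windows agree, supplying the first condition above.

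The heart of the argument is to locate the \emph{first} value $t=M$ at which the agreement breaks. As $t$ increases the low-order part of $p_1+t$ eventually reaches the reserved gap, and the addition of $L_\ell$ triggers a normalisation; because in Zeckendorf base the carry is transversal and propagates on both sides (\Cref{Zeckendorf}), I must follow it carefully and show that the parity of $s_Z$ is preserved for every $t\le M-1$ and is flipped precisely at $t=M$. Verifying that the normalisation changes the number of digits by an \emph{odd} amount exactly at the critical step, and by an even amount before it, is the main obstacle, and it is where the two-sided nature of the Zeckendorf carry makes the analysis genuinely harder than the base-$q$ case governed by \eqref{q-base}.

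Finally I would optimise the parameters. Both $M$ and $p_2+M$ come out as explicit combinations of Fibonacci and Lucas numbers indexed by $\ell$; letting $\ell\to\infty$ and using $F_k\sim\varphi^k/\sqrt 5$ and $L_k\sim\varphi^k$ together with the identity $\varphi^2\sqrt 5=\varphi+\varphi^3$ (equivalently $\varphi+\varphi^3=3\varphi+1$), the ratio $M/(p_2+M)$ tends to $1/(\varphi+\varphi^3)$. To obtain the bound for \emph{every} large $N$, and not merely for the sparse values $N=p_2+M+1$, I would keep the size of the high-order part of $p_1$ and the length of the run as free parameters, making the attainable values of $N$ dense enough to realise $M\ge N/(\varphi+\varphi^3)$ for all $N>N_0$. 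Combined with $M(\mathcal{S}_Z,N)\ge M+1$, this yields the claimed inequality $M(\mathcal{S}_Z,N)\ge \frac{1}{\varphi+\varphi^3}N+1$.
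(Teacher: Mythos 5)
Your reduction to Jansen's criterion (\Cref{Method_Jansen}) and the use of non-interference \eqref{noninterFibo} are the right framework, but the proof has a genuine gap at exactly the point you yourself flag as ``the main obstacle'': you never produce the index $M$ at which the two windows' parities first disagree, nor verify that they agree at every earlier index. This is not a routine verification in your setup. Concretely, with the natural choice $p_1=0$, $p_2=L_\ell=F_{\ell+1}+F_{\ell-1}$, the non-interfering range is $0\le t<F_{\ell-2}$, and at the very first interfering value $t=F_{\ell-2}$ one has $L_\ell+F_{\ell-2}=F_{\ell+1}+F_{\ell-1}+F_{\ell-2}=F_{\ell+1}+F_\ell=F_{\ell+2}$, so $s_Z(L_\ell+t)=1=s_Z(t)$ and the parities \emph{still agree}; the breakpoint is therefore not where the carry analysis would naively place it, and locating it (and checking parity is preserved up to it) is precisely the unproved content. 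Moreover the constant does not come out as claimed: with window length $\approx F_{\ell-2}$ and $N\ge L_\ell+F_{\ell-2}$, the ratio $M/N$ tends to $1/(1+\varphi+\varphi^3)$, which is strictly smaller than the asserted $1/(\varphi+\varphi^3)$; your final paragraph asserts the right limit without a computation that supports it.

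The paper avoids the carry analysis entirely by a different and much simpler pairing: it compares the two windows $(s_Z(n+F_{\ell+1}))_{0\le n<F_\ell}$ and $(s_Z(n+F_{\ell+2}))_{0\le n<F_\ell}$ (\Cref{Maximum_order_complexity_linear}). Both equal $(s_Z(n)+1)_{0\le n<F_\ell}$ by non-interference, and the successors differ for the trivial reason that $F_\ell+F_{\ell+1}=F_{\ell+2}$ has one Zeckendorf digit while $F_\ell+F_{\ell+2}$ has two, so no normalisation ever has to be followed. Taking $F_\ell+F_{\ell+2}\le N<F_{\ell+1}+F_{\ell+3}$ then gives $M(\mathcal{S}_Z,N)\ge F_\ell+1$ with $N<(\varphi+\varphi^3)F_\ell$ for large $\ell$, which is where the constant comes from. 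If you want to salvage your Lucas-shift construction, you would need to carry out the deferred two-sided carry analysis and re-optimise the parameters; as written, the argument establishes only the strategy, not the bound.
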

Since the correlation measure is bounded below by the maximum order complexity (see \cite{MW2021}), Theorem \ref{Main_theorem_linear} shows that $\mathcal{S}_Z$ is \textit{non-random} in this specific respect. We mention also the recent result of Shutov \cite{Sh20} that shows that the autocorrelation of order 2 with lag equal to 1 of $(-1)^{s_Z(n)}$ is large. This serves as motivation to look at subsequences, that we address in following result.

\begin{theorem} \label{Main_theorem_polynomial}
Let $P(X) \in \mathbb{Z}[X]$ be a monic polynomial of degree $d\geq 2$ with $P(\mathbb{N}_0) \subset \mathbb{N}_0$. Then $\mathcal{S}_{Z,P}$ satisfies \begin{align*}
M(\mathcal{S}_{Z,P},N)\gg N^{1/(2d)}, \qquad N \to \infty,
\end{align*}where the implied constant only depends on $P$.
\end{theorem}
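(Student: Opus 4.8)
The plan is to reduce the theorem to the construction of a single \emph{collision}: a pair of indices whose length-$M$ windows in $\mathcal S_{Z,P}$ coincide but whose immediate successors differ. Concretely, if $a<b$ with $b+M\le N-1$ satisfy
\[
s_Z(P(a+j))\equiv s_Z(P(b+j)) \pmod 2,\quad 0\le j\le M-1,
\]
while $s_Z(P(a+M))\not\equiv s_Z(P(b+M))\pmod 2$, then no feedback function of order $M$ can reproduce the first $N$ terms, since it would be forced to output the same symbol at positions $a+M$ and $b+M$. Hence $M(\mathcal S_{Z,P},N)\ge M+1$, and it suffices to produce such a collision with $M\gg N^{1/(2d)}$ and $b+M\le N$.

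The mechanism for creating the collision mirrors the proof of \Cref{Main_theorem_linear}. In the linear case one shifts the argument by a Lucas number $L_\lambda$: by \eqref{noninterFibo}, if $n$ has a sufficiently long block of zero digits around position $\lambda$, then $L_\lambda=F_{\lambda-1}+F_{\lambda+1}$ can be added without normalization, so $s_Z(n+L_\lambda)=s_Z(n)+2$ and the parity is \emph{preserved}; when the block of zeros is too short, the transversal carry of \Cref{Zeckendorf} forces an \emph{odd} change in $s_Z$ and the parity \emph{flips}. Preserving parity across an interval of consecutive arguments and flipping it at one endpoint is exactly a collision, and I would aim to reproduce this even/odd dichotomy along the polynomial values $P(a+t)$.

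The difficulty specific to the polynomial case is that the difference $P(b+t)-P(a+t)$ cannot be a fixed Lucas number for more than one value of $t$. To restore synchronization I would shift the argument by a Fibonacci number, $b=a+F_\lambda$, and exploit the congruence $P(x+F_\lambda)\equiv P(x)\pmod{F_\lambda}$, valid because $P$ is monic with integer coefficients and every nonconstant term of $P(x+F_\lambda)-P(x)$ carries a factor $F_\lambda$. Consequently $P(a+t)$ and $P(b+t)$ agree modulo $F_\lambda$ for \emph{all} $t$, so their low-order Zeckendorf digits coincide; choosing $a$ so that each value $P(a+t)$, $0\le t\le M$, carries a zero block just above position $\lambda$ lets me split $P(a+t)$ into a common low part and a high part via \eqref{noninterFibo}. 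The parity of $s_Z(P(a+t))$ is then governed by the high part alone, and it is on the high parts that I would install the Lucas-shift dichotomy of the previous paragraph.

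Two constraints fix the parameters. The shift must keep the indices admissible, $b+M=a+F_\lambda+M\le N$, forcing $F_\lambda\le N$; and the values $P(a+t)$ must remain within a single zero-block-compatible interval of length $\asymp F_\lambda$ as $t$ runs over the window, which, since $P(a+t+1)-P(a+t)\asymp a^{d-1}$, limits the window to $M\asymp F_\lambda/a^{d-1}$. Optimizing these relations under $P(a)\asymp a^{d}$ and $a\le N$ is what produces the exponent $1/(2d)$: one factor $d$ comes from $P(n)\asymp n^{d}$, so that controlling a fixed digit range of $P$ costs a $d$-th root in the argument, and the second from balancing the window length against the growth of $P'$. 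The main obstacle is precisely this carry analysis: because Zeckendorf carries propagate \emph{transversally} on both sides (\Cref{Zeckendorf}, $2F_j=F_{j+1}+F_{j-2}$), guaranteeing that the zero block survives for every argument in the window, that the low parts genuinely coincide rather than merely being congruent modulo $F_\lambda$, and that a single clean parity flip occurs at $t=M$, is considerably more delicate than the one-directional carries underlying the base-$q$ identity \eqref{q-base}. Verifying this dichotomy uniformly over the whole window is where the technical heart of the proof lies.
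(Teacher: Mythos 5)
Your high-level strategy is the same as the paper's: exhibit two length-$M$ windows of $\mathcal S_{Z,P}$ that coincide but have different successors and invoke Jansen's criterion (\Cref{Method_Jansen}). However, the central mechanism you propose does not work. You shift the argument by a Fibonacci number, $b=a+F_\lambda$, and argue that since $P(a+t)\equiv P(b+t)\pmod{F_\lambda}$, the low-order Zeckendorf digits of the two values coincide. This is false: congruence modulo $F_\lambda$ does not control Zeckendorf digits. For instance $F_\lambda\equiv 0\pmod{F_\lambda}$ yet has a nonzero digit at position $\lambda-2$, and $2F_\lambda=F_{\lambda+1}+F_{\lambda-2}$ has a nonzero digit four positions lower; more generally the Zeckendorf ``low part'' of $n$ is not $n$ reduced modulo any Fibonacci number. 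This is precisely the obstruction recorded in \Cref{remark}: the base-$q$ identity \eqref{q-base} has no analogue with $q^\ell$ replaced by $F_\ell$. The paper circumvents it by shifting the \emph{argument} of $P$ by a Lucas number $L_\ell$ and expanding $P(n+L_\ell)=\sum_\lambda\beta_\lambda(n)L_{\lambda\ell}$ via the identities $L_kL_\ell=L_{k+\ell}+(-1)^\ell L_{k-\ell}$ and $mL_k=\sum_j\varepsilon_j(m)F_{k+j}$ (\Cref{Lucas}, \Cref{decomposition2}); the summands then occupy disjoint digit ranges for $\ell$ large, which is what makes the two windows $\bigl(s_Z(P(n+L_{2d\mu k+2}))\bigr)_n$ and $\bigl(s_Z(P(n+L_{2d\mu k+2r}))\bigr)_n$ genuinely coincide. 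Your proposal contains no substitute for this multiplicative structure, and you yourself flag the carry analysis as the unresolved ``technical heart.''

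Two further gaps. First, the parity flip at the end of the window is not constructed: the paper needs a nontrivial input here, namely the polynomial $t(k)=m_3L_{6k}-m_2L_{4k}+m_1L_{2k}+m_0$ with a single negative subdominant coefficient (producing a long alternating digit block whose length changes by one from $k$ to $k+1$) in the monomial case, and in the general case the choice $n=L_\lambda$ with $\mu=2d-1$, $\lambda=2dk$ so that exactly one interference of the form $L_{j+2}+L_j$ occurs, which has odd digit sum by \Cref{two_Lucas_numbers}. Saying you will ``install the Lucas-shift dichotomy on the high parts'' does not supply this. Second, your accounting for the exponent is off: balancing the window length against the growth of $P'$ as you describe would yield $N^{1/d}$, not $N^{1/(2d)}$. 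The extra factor of $2$ in the paper comes from the fact that the digits of $mL_k$ spread on \emph{both} sides of position $k$, so the shift index must be roughly twice the window-length parameter ($\ell>6k$ for a window controlled by $F_{3k}$), giving $N\ll F_k^{6}$ against $M\gg F_k^{3/d}$.
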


We will discuss the heuristics that supports the real growth of the maximum order complexity in the final part of the paper (Section \ref{secConjecture}). In particular, we conjecture that the lower bound is indeed the actual growth of the maximal order complexity.

The rest of the paper is structured as follows. In Section $2$, we prove \Cref{Main_theorem_linear} and \Cref{Main_theorem_polynomial} where we first prove the linear case and the particular case $P(X)=X^d$ for $d\geq 2$ before tackling the general case. We conclude the paper with some final remarks (\Cref{sec_final_remarks}) about possible generalizations of this result to other numerations systems and some heuristically supported conjectures.

\section{Sum of digits function in Zeckendorf base}

In order to prove a lower bound for the \textit{Nth maximum order complexity} of a sequence, we use a tool from~\cite[Proposition 3.1]{jansen1989}.
\begin{lem}[\cite{jansen1989}]\label{Method_Jansen} Let $\mathcal{S}$ be a sequence over $\mathbb{F}_2$ of length $n$. Let $k$ be the length of the longest subsequence of $\mathcal{S}$ that occurs at least twice with different successors. Then $\mathcal{S}$ has maximum order complexity $k+1$.
\end{lem}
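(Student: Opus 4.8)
The plan is to recast the existence of a feedback polynomial of a prescribed window length $M$ as a purely combinatorial condition on the factors (contiguous subwords) of $\mathcal{S}$, and then to locate the threshold at which that condition first becomes satisfiable. Throughout I read ``subsequence'' in the statement as ``factor'', i.e.\ a contiguous block, which is the notion relevant to the DAWG/suffix-automaton computation.

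First I would show that a polynomial $f \in \mathbb{F}_2[x_1,\ldots,x_M]$ with $s_{i+M}=f(s_i,\ldots,s_{i+M-1})$ for all admissible $i$ exists \emph{if and only if} the successor rule is single-valued on windows of length $M$: whenever a length-$M$ factor occurs at two positions $i$ and $j$, both admitting a successor, one has $s_{i+M}=s_{j+M}$. The ``only if'' direction is immediate, since $f$ is a function of its input window. For ``if'', the key remark is that over $\mathbb{F}_2$ every map $\mathbb{F}_2^M \to \mathbb{F}_2$ is given by a (multilinear) polynomial, its algebraic normal form; hence once the successor is well defined on the windows that actually occur, one extends it arbitrarily to all of $\mathbb{F}_2^M$ and reads off the polynomial. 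This is the step where restricting the feedback map to be a polynomial, as in the definition of $M(\mathcal{S},N)$, costs nothing.

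Next, call a factor \emph{branching} if it occurs at two positions with different successors, and let $k$ be the maximal length of a branching factor. By the previous paragraph, window length $M$ admits no feedback polynomial exactly when some factor of length $M$ is branching. The structural point is a downward closure under taking suffixes: if $w=s_i\cdots s_{i+M-1}=s_j\cdots s_{j+M-1}$ is branching with $s_{i+M}\neq s_{j+M}$, then its length-$(M-1)$ suffix $s_{i+1}\cdots s_{i+M-1}=s_{j+1}\cdots s_{j+M-1}$ is again branching, with the same two distinct successors. Hence branching factors exist in every length $0,1,\ldots,k$ and in none of length exceeding $k$.

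Combining the two facts yields the claim at once: window length $M$ fails precisely when $M \leq k$ and succeeds precisely when $M \geq k+1$, so the least admissible window length, namely $M(\mathcal{S},N)$, equals $k+1$. I would close by checking the boundary bookkeeping: ``occurs at least twice with different successors'' must be read with both occurrences having their successor inside the length-$n$ range, and the degenerate constant or near-constant cases are exactly those already separated out in the definition of $M(\mathcal{S},N)$. The only genuinely delicate ingredient is the well-definedness-equals-polynomial equivalence of the first step; the remainder is the elementary suffix argument, which I expect to be routine.
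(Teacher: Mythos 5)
The paper does not prove this lemma: it is quoted verbatim from Jansen (Proposition 3.1 of that reference) and used as a black box, so there is no in-paper argument to compare against. Your proof is correct and is essentially the standard argument behind Jansen's result: the equivalence between the existence of a feedback polynomial of window length $M$ and single-valuedness of the successor map on occurring length-$M$ factors (via the algebraic normal form of an arbitrary map $\mathbb{F}_2^M\to\mathbb{F}_2$), together with the suffix-closure of branching factors, which is exactly the step needed to conclude that \emph{every} window length $M\leq k$ fails and hence that the minimum is $k+1$ rather than merely at most $k+1$. Your boundary remarks (both occurrences must admit a successor within the first $N$ terms; the constant case is carved out in the definition of $M(\mathcal{S},N)$) are the right ones.
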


The proofs of \Cref{Main_theorem_linear} and \Cref{Main_theorem_polynomial} are split into two parts. Let $N$ be a sufficiently large integer. First we built two subsequences of  the sequence $\mathcal{S}$ of same length $L(N)$, depending on $N$, which coincide by using non-interfering terms. Then, for these subsequences, we look for different successors, by studying precisely the involved carry propagations. Thus, by \Cref{Method_Jansen}, we then will have $M(\mathcal{S},N)\geq L(N)+1$.

\subsection{Linear case}
We first study the maximum order complexity of $\mathcal{S}_Z$. 

\begin{lem}\label{Maximum_order_complexity_linear}
Let $\ell \geq 2$, for all $0 \leq n<F_{\ell}$ we have  \begin{align*} s_Z(n+F_{\ell+1})&=s_Z(n+F_{\ell+2}), \\ s_Z(F_{\ell}+F_{\ell+1}) \bmod 2&\neq s_Z(F_{\ell}+F_{\ell+2}) \bmod 2.
\end{align*}
\end{lem}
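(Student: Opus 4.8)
The plan is to reduce everything to the non-interfering additivity identity \eqref{noninterFibo} together with the single Fibonacci recurrence $F_{\ell}+F_{\ell+1}=F_{\ell+2}$, by tracking digit positions in the Zeckendorf expansion. Recall that in the expansion $n=\sum_{i\geq 0}\varepsilon_i(n)F_{i+2}$ the Fibonacci number $F_{\ell}$ occupies position $i=\ell-2$, so $F_{\ell+1}$ and $F_{\ell+2}$ occupy positions $\ell-1$ and $\ell$ respectively.

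First I would establish the equality for $0\leq n<F_{\ell}$. Since $n<F_{\ell}$, the largest Fibonacci number appearing in the Zeckendorf expansion of $n$ is at most $F_{\ell-1}$, that is, its top digit sits in position $\leq \ell-3$. Hence the gap between the top block of $n$ and the single digit $F_{\ell+1}$ (position $\ell-1$) is at least $2$, and the gap to $F_{\ell+2}$ (position $\ell$) is at least $3$; in both cases $n$ is non-interfering with the added Fibonacci number in the sense of the condition $k_2-K_1\geq 2$. Applying \eqref{noninterFibo} twice gives $s_Z(n+F_{\ell+1})=s_Z(n)+1=s_Z(n+F_{\ell+2})$ (the case $n=0$ being trivial), which yields the first claimed identity, in particular modulo $2$.

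For the successors at $n=F_{\ell}$ the two situations decouple. On one hand $F_{\ell}$ (position $\ell-2$) and $F_{\ell+1}$ (position $\ell-1$) are adjacent, so they are \emph{not} non-interfering; instead the Fibonacci recurrence collapses the sum to $F_{\ell}+F_{\ell+1}=F_{\ell+2}$, whence $s_Z(F_{\ell}+F_{\ell+1})=s_Z(F_{\ell+2})=1$. On the other hand $F_{\ell}$ and $F_{\ell+2}$ are two positions apart, so they are non-interfering and \eqref{noninterFibo} gives $s_Z(F_{\ell}+F_{\ell+2})=s_Z(F_{\ell})+s_Z(F_{\ell+2})=2$. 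Since $1$ and $2$ have opposite parity, the second claim follows.

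The only genuine care needed is the index bookkeeping: one must check that the gap for $n+F_{\ell+1}$ meets the threshold $k_2-K_1\geq 2$ exactly (and not merely $\geq 1$), which is precisely where Zeckendorf differs from the usual $q$-ary base as noted in \Cref{remark}, and that the adjacency of $F_{\ell}$ and $F_{\ell+1}$ at the boundary $n=F_{\ell}$ is exactly the configuration triggering the transversal carry $F_{\ell}+F_{\ell+1}=F_{\ell+2}$ and thereby dropping the digit sum from $2$ to $1$. This parity drop, occurring exactly at $n=F_{\ell}$, is what will later supply the ``different successors'' required by \Cref{Method_Jansen}.
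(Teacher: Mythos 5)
Your proof is correct and follows the same route as the paper: the first identity via the non-interfering criterion \eqref{noninterFibo} applied to $n$ with $F_{\ell+1}$ and with $F_{\ell+2}$, and the second via the direct computations $s_Z(F_{\ell}+F_{\ell+1})=s_Z(F_{\ell+2})=1$ and $s_Z(F_{\ell}+F_{\ell+2})=2$. You merely spell out the index bookkeeping (top digit of $n$ at position $\leq\ell-3$, so the gap to positions $\ell-1$ and $\ell$ meets the threshold $k_2-K_1\geq 2$) that the paper's two-sentence proof leaves implicit.
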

\begin{proof}
If $n<F_{\ell}$, the terms $n$ and $F_{\ell+1}$, respectively, $n$ and $F_{\ell+2}$ are non-interfering, see (\ref{noninterFibo}). The second line follows from $s_Z(F_{\ell}+F_{\ell+1})=1$ and $s_Z(F_{\ell}+F_{\ell+2})=2$.
\end{proof}

We are now able to prove \Cref{Main_theorem_linear}.
\begin{proof}[Proof of \Cref{Main_theorem_linear}]
We choose $\ell\geq 2$ such as $F_{\ell}+F_{\ell+2} \leq N < F_{\ell+1}+F_{\ell+3}$. This implies $N\geq F_2+F_4=4$. Then by \Cref{Maximum_order_complexity_linear}, $(s_Z(n+F_{\ell+1}))_{0\leq n <F_{\ell}}$ and $(s_Z(n+F_{\ell+2}))_{0\leq n <F_{\ell}}$ are two subsequences of same length with different successors. Thus by \Cref{Method_Jansen}, we have $M(\mathcal{S}_Z,N)\geq F_{\ell}+1$. Furthermore, since $\lim_{n\to \infty} \frac{F_{n+k}}{F_n}= \varphi^k$, for $k\geq 1$, we have $N<(\varphi+\varphi^3)F_{\ell}$ for $\ell$ large enough. Thus, the theorem is proved. 
\end{proof}

\subsection{Monomial subsequences}

We now study the sequence $\mathcal{S}_Z$ along polynomial values. In a classical $q$-base, $xq^j$ is a shift of length $j$ for the expansion of $x$. In Zeckendorf base, the Fibonacci numbers do not have this property since a power of a Fibonacci number is in general not a Fibonacci number (note that the only exceptions of pure powers that are Fibonacci numbers are $1,8$ and $144$, see~\cite{bugeaudmignottesiksek2006}). The Lucas numbers are an interesting analogue of powers of $q$ in $q$-base.
\begin{Def}[Lucas numbers]
Let $\mathcal{L}=(L_n)_n$ be the sequence defined by $L_0=2$, $L_1=1$ and $L_{j+2}=L_{j+1}+L_j$ for all $j \geq 0$. 
\end{Def}

We have for all $j\geq 1$, the basic relation $L_j=F_{j+1}+F_{j-1}$, which means that the expansion of Lucas numbers in Zeckendorf base is simple. Moreover we have the following formulas.

\begin{lem}[\cite{stoll2013}] \label{Lucas}
For all $k\geq \ell \geq 0$ and $h\geq 0$, we have \begin{enumerate}
\item $L_kL_{\ell}=L_{k+\ell}+(-1)^{\ell}L_{k-\ell}$,
\item $L_kF_{\ell}=F_{k+\ell}-(-1)^{\ell}F_{k-\ell}$, 
\item $L_k^h=\sum \limits_{i=0}^{\frac{h-1}{2}}\binom{h}{i}(-1)^{ik}L_{(h-2i)k},\qquad h$ odd, 
\item $L_k^h=\sum \limits_{i=0}^{\frac{h}{2}-1}\binom{h}{i}(-1)^{ik}L_{(h-2i)k}+\binom{h}{h/2}(-1)^{hk/2},\qquad h$ even. 
\end{enumerate}

Furthermore, let $m> 0$, there are non-adjacent terms $\varepsilon_{-(2u+1)}(m),\ldots$,

 $\varepsilon_{2u+v}(m) \in \{0,1\}$, $\varepsilon_{-2u-1}(m)=\varepsilon_{2u+v}(m)=1$ for some $u,v$ integers depending only on $m$ such that for all $k\geq 2u+3$,
\begin{align*} mL_k=\sum \limits_{j=-(2u+1)}^{2u+v}\varepsilon_j(m)F_{k+j}
\end{align*} 
and $[-(2u+1),2u+v] \subseteq \left[k-\ell-1,k+\ell+1\right]$ where $\ell$ is such that $F_{\ell} \leq m < F_{\ell +1}$. 

\end{lem}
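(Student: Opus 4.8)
The plan is to treat the four closed-form identities and the Zeckendorf-expansion statement by quite different means: the former are pure Binet-formula computations, while the latter requires a normalization (carry) argument whose decisive feature is that it is \emph{independent of $k$}.

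For identities (1)--(4) I would substitute the Binet formulas $F_n=(\varphi^n-\psi^n)/\sqrt5$ and $L_n=\varphi^n+\psi^n$, where $\psi=(1-\sqrt5)/2$ is the conjugate root, so that $\varphi\psi=-1$ and hence $(\varphi\psi)^\ell=(-1)^\ell$. For (1) and (2) I expand the products $(\varphi^k+\psi^k)(\varphi^\ell\pm\psi^\ell)$ and collect the two ``diagonal'' cross terms, factoring out $(\varphi\psi)^\ell=(-1)^\ell$ to recognize $L_{k-\ell}$, resp.\ $F_{k-\ell}$; the sign discrepancy between the two identities is exactly the sign that survives when $\sqrt5$ is divided out in the Fibonacci case. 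For (3) and (4) I expand $L_k^h=(\varphi^k+\psi^k)^h$ by the binomial theorem and pair the terms $i$ and $h-i$: each such pair contributes $\binom{h}{i}(\varphi\psi)^{ki}\bigl(\varphi^{(h-2i)k}+\psi^{(h-2i)k}\bigr)=\binom{h}{i}(-1)^{ik}L_{(h-2i)k}$, while for even $h$ the unpaired middle term $i=h/2$ gives the extra constant $\binom{h}{h/2}(-1)^{hk/2}$. This part is routine.

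The substantive statement is the existence of a $k$-independent Zeckendorf pattern for $mL_k$. First I would fix the Zeckendorf expansion $m=\sum_{s\in S}F_s$, with $S$ a finite set of non-adjacent indices and $\max S\le\ell$ (as $m<F_{\ell+1}$). Applying identity (2) termwise gives the exact, \emph{$k$-free} signed representation
\begin{align*}
mL_k=\sum_{s\in S}\bigl(F_{k+s}-(-1)^sF_{k-s}\bigr)=\sum_{j}a_jF_{k+j},
\end{align*}
where the integer coefficients $a_j$ depend only on $m$ and are supported on $|j|\le\max S\le\ell$, with the extreme coefficients $a_{\ell},a_{-\ell}$ nonzero. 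It then remains to normalize this signed combination into a genuine Zeckendorf word, i.e.\ to convert the $a_j$ into non-adjacent digits in $\{0,1\}$ using the Fibonacci relations $F_n+F_{n-1}=F_{n+1}$, $2F_n=F_{n+1}+F_{n-2}$ and $F_{n+1}-F_{n-1}=F_n$.

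The crux of the argument, and the step I expect to be the main obstacle, is that this normalization is \emph{shift-equivariant}: every relation I invoke rewrites a combination of $\{F_{k+j}\}$ into another combination of $\{F_{k+j'}\}$ by a rule on the offsets $j$ alone, \emph{provided} all the Fibonacci indices involved remain $\ge2$, where the recurrence is valid and the representation unique. Consequently, running the normalization purely on the offset template $(a_j)_j$ produces an output pattern $(\varepsilon_j)_j$ that is a fixed function of $m$ and does not depend on $k$; reinstating the shift yields $mL_k=\sum_j\varepsilon_jF_{k+j}$ for every $k$ for which the lowest offset stays admissible. Writing the endpoints of the resulting window as $-(2u+1)$ and $2u+v$, admissibility of the bottom is precisely $k-(2u+1)\ge2$, i.e.\ $k\ge2u+3$, which is the stated threshold; that the extreme digits are genuinely $1$ (so the window equals the support and $\varepsilon_{-2u-1}(m)=\varepsilon_{2u+v}(m)=1$) follows by tracking that the leading and trailing nonzero offsets $a_{\pm\ell}$ never cancel. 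For the enclosure of the window I would bound $mL_k$ between $F_\ell L_k$ and $F_{\ell+1}L_k$ and apply identity (2) to control the extreme Fibonacci indices that can occur, confirming the spread $[-\ell-1,\ell+1]$ (one position beyond the initial support $[-\ell,\ell]$, as already visible in the carry $2F_n=F_{n+1}+F_{n-2}$ of the Example). The delicate point throughout is the transversal carry emphasized in the Example and Remark: I must argue that it neither escapes the $[-\ell-1,\ell+1]$ window nor ever reaches the boundary index $2$ once $k\ge2u+3$, and that the rewriting terminates — the latter because each carry strictly decreases a suitable potential (the number of digits $\ge2$ or $<0$, together with a bounded budget for two-sided propagation).
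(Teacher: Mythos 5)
The paper itself offers no proof of this lemma: it is imported wholesale from \cite{stoll2013} (parts (1)--(4) are classical Lucas--Fibonacci identities, and the final statement is a lemma of that reference), so there is no in-paper argument to compare against. Judged on its own terms, your proposal follows essentially the route of the cited source. The Binet-formula derivations of (1)--(4) are complete and correct, and for the $mL_k$ statement the plan --- apply (2) termwise to the Zeckendorf expansion of $m$ to obtain the $k$-free signed template $\sum_j a_jF_{k+j}$ with $a_j\in\{-1,0,1\}$ supported on $[-\ell,\ell]$, then invoke shift-equivariance of the normalization --- is the right idea, and your identification of $k\ge 2u+3$ as the admissibility threshold for the lowest index is correct.

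The substantive gap is the one you flag yourself: the locality and termination of the normalization, which is where all of the lemma's quantitative content lives. Two concrete points. First, bounding $F_\ell L_k\le mL_k<F_{\ell+1}L_k$ controls only the \emph{top} of the window (it forces the leading Zeckendorf index into $\{k+\ell-1,k+\ell,k+\ell+1\}$); it says nothing about how far \emph{down} the expansion reaches, since numbers of the same size can have very different lowest digits. The containment of the bottom endpoint must therefore come either from bookkeeping of the rewriting rules (showing that the down-going moves $2F_n=F_{n+1}+F_{n-2}$ and $F_{n+1}-F_n=F_{n-1}$ extend the support by at most one position below $-\ell$), or from the conjugate-root identity $m\sqrt5=\bigl|\sum_j\varepsilon_j\psi^j\bigr|$ with $\psi=(1-\sqrt5)/2$, which pins the lowest offset near $-\ell$. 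Second, the statement specifies that the bottom index has the form $-(2u+1)$, i.e.\ is odd; your proposal does not address this parity, which again drops out of tracking how $F_{k+s}-(-1)^sF_{k-s}$ resolves according to the parity of $s$ (for $s$ even the lowest surviving digit lands at offset $-s-1$). Neither point invalidates the approach, but as written the proposal fully proves only (1)--(4) and sketches precisely the part of the lemma that the rest of the paper relies on.
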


The last statement of \Cref{Lucas} has an important role for the construction of sums with non-interfering terms. Indeed, for an integer $m$ and $k$ large enough (of order of length of $m$), the expansion of $mL_k$ is the expansion of $m$ centered around $F_k$. Notice that this result is different from the $q$-base since the digits here appear on both sides of the expansion of $m$. We will see the impact in our main result \Cref{Main_theorem_polynomial} with the occurence of $N^{1/(2d)}$ in the place of $N^{1/d}$ that we got in the case of $q$-base expansion, see~\cite{popoli2020}.

\bigskip

Let us start with the case $P(X)=X^d$ for $d\geq 2$. In the following we write $\overline{\lambda}=\lambda \mod 2$ with $\overline{\lambda}\in \{0,1\}$. This case is the building brick for the general case of $P(X) \in \mathbb{Z}[X]$ with $P(\mathbb{N}_0) \subset \mathbb{N}_0$.

To begin with, we study the expression of $(n+L_{\ell})^d$ in terms of a sum of distinct Lucas numbers. 

\begin{lem}\label{decomposition}
Let $n\geq 0$ be an integer and $\ell\geq 0$ be an even integer. We have \begin{align*}
(n+L_{\ell})^d&=\sum \limits_{i=0} ^{\frac{d-\overline{d}}{2}} \eta_{i,0}n^{d-2i}+\sum \limits_{\lambda=1}^d\left(\sum \limits_{i=\frac{\lambda-\overline{\lambda}}{2}}^{\frac{d-\overline{d}}{2}-\overline{\lambda}}\eta_{i,\lambda} n^{d-2i-\overline{\lambda}} \right) L_{\lambda \ell},
\end{align*} with $\eta_{i,\lambda}=\binom{d}{2i+\overline{\lambda}}\binom{2i+\overline{\lambda}}{i-\frac{\lambda-\overline{\lambda}}{2}}$.
\end{lem}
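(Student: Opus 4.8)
The plan is to start from the binomial theorem and then replace each power of $L_{\ell}$ by its expansion into single Lucas numbers provided by \Cref{Lucas}. Concretely, I would first write
\[
(n+L_{\ell})^d=\sum_{m=0}^{d}\binom{d}{m}n^{d-m}L_{\ell}^{m},
\]
so that everything reduces to understanding $L_{\ell}^{m}$ for $0\le m\le d$. Here the hypothesis that $\ell$ is \emph{even} is what makes the computation clean: in the identities (3) and (4) of \Cref{Lucas} every sign $(-1)^{i\ell}$ and the central sign $(-1)^{m\ell/2}$ become $+1$, so that for odd $m$ one has $L_{\ell}^{m}=\sum_{j=0}^{(m-1)/2}\binom{m}{j}L_{(m-2j)\ell}$, while for even $m$ one has $L_{\ell}^{m}=\sum_{j=0}^{m/2-1}\binom{m}{j}L_{(m-2j)\ell}+\binom{m}{m/2}$. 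Without the evenness of $\ell$ the coefficients would carry $i$- and $m$-dependent signs and would not collapse into the single positive coefficients $\eta_{i,\lambda}$.

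Next I would substitute these expansions into the binomial sum and interchange the order of summation so as to group the result according to which Lucas number appears. Every term carries a factor $L_{(m-2j)\ell}$, with the convention that the central terms $\binom{m}{m/2}$ of the even powers form the Lucas-free part (the case $\lambda=0$). I would therefore set $\lambda:=m-2j$ and collect, for each fixed $\lambda$, all pairs $(m,j)$ with $m-2j=\lambda$ and $0\le m\le d$. Since $m$ and $\lambda$ then have the same parity $\overline{\lambda}$, the substitution $m=2i+\overline{\lambda}$ (equivalently $j=i-\tfrac{\lambda-\overline{\lambda}}{2}$) turns the coefficient $\binom{d}{m}\binom{m}{j}$ into exactly $\binom{d}{2i+\overline{\lambda}}\binom{2i+\overline{\lambda}}{\,i-\frac{\lambda-\overline{\lambda}}{2}}=\eta_{i,\lambda}$ and the power of $n$ into $n^{d-m}=n^{d-2i-\overline{\lambda}}$, which is the shape claimed in the statement.

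The main work — and the step most prone to error — is getting the summation ranges right after this reindexing. The lower bound $i=\tfrac{\lambda-\overline{\lambda}}{2}$ corresponds to $j=0$, i.e. to the largest admissible index $m=\lambda$ in each power $L_{\ell}^{m}$, while the upper bound comes from combining $j\ge 0$ with $m=2i+\overline{\lambda}\le d$ and the parity restriction that only $m\equiv\overline{\lambda}\pmod 2$ contributes to $L_{\lambda\ell}$. I would treat the parities of $d$ and $\lambda$ as separate cases here, since the largest admissible $m$ of a prescribed parity not exceeding $d$ is either $d$ or $d-1$; this is precisely where $\overline{d}$ and $\overline{\lambda}$ enter the bounds, and it is worth verifying the endpoints on the extreme cases $\lambda=1$ and $\lambda=d$ against a small example such as $d=2,3$ before trusting the general expression.

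Finally I would isolate the Lucas-free part ($\lambda=0$): it is assembled solely from the central terms $\binom{m}{m/2}$ of the even powers, so summing $\binom{d}{m}\binom{m}{m/2}n^{d-m}$ over even $m$ and substituting $m=2i$ gives $\sum_{i=0}^{(d-\overline{d})/2}\binom{d}{2i}\binom{2i}{i}n^{d-2i}=\sum_{i=0}^{(d-\overline{d})/2}\eta_{i,0}\,n^{d-2i}$, which is the first sum in the statement. Assembling this with the $\lambda\ge 1$ contributions from the previous step then yields the claimed identity.
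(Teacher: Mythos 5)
Your strategy is exactly the paper's: expand by the binomial theorem, insert identities (3) and (4) of \Cref{Lucas} (where the evenness of $\ell$ kills all signs), and regroup according to $\lambda=m-2j$; the substitution $m=2i+\overline{\lambda}$ and the identification of the $\lambda=0$ part with the central terms $\binom{m}{m/2}$ of the even powers are all as in the paper, which merely organizes the same computation by first splitting the binomial sum according to the parity of $m$. So the approach is sound and not genuinely different from the paper's.

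The one step you defer, however, is the one that actually needs doing: the endpoint check. Your own constraints ($j\ge 0$ and $m=2i+\overline{\lambda}\le d$) give the upper limit $i\le\lfloor(d-\overline{\lambda})/2\rfloor$, which agrees with the printed bound $\frac{d-\overline{d}}{2}-\overline{\lambda}$ in three of the four parity cases but exceeds it by one when $d$ and $\lambda$ are both odd. The test case $d=3$ that you suggest but do not carry out exposes this: $(n+L_\ell)^3=(n^3+6n)+(3n^2+3)L_\ell+3nL_{2\ell}+L_{3\ell}$, whereas the printed ranges give only $3n^2$ as the coefficient of $L_\ell$ (the missing $+3$ is the $i=1$ term coming from $L_\ell^3=L_{3\ell}+3L_\ell$) and an empty sum for $\lambda=3$. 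So if you follow your plan faithfully you will prove the identity with upper limit $\lfloor(d-\overline{\lambda})/2\rfloor$ rather than the identity as literally printed; the paper's own displayed formula for odd $d$ carries the same off-by-one. Carry out the verification you propose and record the corrected bound --- everything else in your argument is fine and matches the paper's proof.
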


\begin{proof}
Let $d$ be even. Then by \Cref{Lucas} we have \begin{align*}
(n+L_{\ell})^d&=\sum \limits_{i=0} ^{d/2} \binom{d}{2i} n^{d-2i} L_{\ell}^{2i}+\sum \limits_{i=0}^{d/2-1}\binom{d}{2i+1}n^{d-2i-1}L_{\ell}^{2i+1}, \\&=n^{d}+ \sum \limits_{i=1} ^{d/2} \binom{d}{2i} n^{d-2i} \left(\sum \limits_{j=0} ^{i-1} \binom{2i}{j} L_{(2i-2j)\ell}+\binom{2i}{i} \right) \\  &\qquad+\sum \limits_{i=0}^{d/2-1}\binom{d}{2i+1}n^{d-2i-1} \left(\sum \limits_{j=0} ^{i} \binom{2i+1}{j} L_{(2i-2j+1)\ell} \right),  \\ &=\sum \limits_{i=0} ^{\frac{d}{2}} \binom{d}{2i}\binom{2i}{i}n^{d-2i}\\ &\qquad+ \sum \limits_{\lambda=1}^d\left(\sum \limits_{i=\frac{\lambda-\overline{\lambda}}{2}}^{\frac{d}{2}-\overline{\lambda}}\binom{d}{2i+\overline{\lambda}}\binom{2i+\overline{\lambda}}{i-\frac{\lambda-\overline{\lambda}}{2}} n^{d-2i-\overline{\lambda}} \right) L_{\lambda \ell}.
\end{align*}
For $d$ odd the proof runs along the same lines and we have \begin{align*}
(n+L_{\ell})^d=\sum \limits_{i=0} ^{\frac{d-1}{2}} &\binom{d}{2i}\binom{2i}{i}n^{d-2i}\\ &\qquad+ \sum \limits_{\lambda=1}^d\left(\sum \limits_{i=\frac{\lambda-\overline{\lambda}}{2}}^{\frac{d-1}{2}-\overline{\lambda}}\binom{d}{2i+\overline{\lambda}}\binom{2i+\overline{\lambda}}{i-\frac{\lambda-\overline{\lambda}}{2}} n^{d-2i-\overline{\lambda}} \right) L_{\lambda \ell}.
\end{align*}
\end{proof}
\begin{lem} \label{interference}
Let $n\geq d$ be an integer, $d\geq 3$ an odd integer and $1\leq \lambda \leq d$. Then we have \begin{align*} \sum \limits_{i=\frac{\lambda-\overline{\lambda}}{2}}^{\frac{d-1}{2}-\overline{\lambda}}\binom{d}{2i+\overline{\lambda}}\binom{2i+\overline{\lambda}}{i-\frac{\lambda-\overline{\lambda}}{2}} n^{d-2i-\overline{\lambda}}<\sum \limits_{i=0} ^{\frac{d-1}{2}} \binom{d}{2i}\binom{2i}{i}n^{d-2i}.
\end{align*}
\end{lem}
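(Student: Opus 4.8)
The plan is to reindex both sums by $k=2i+\overline{\lambda}$, so that the exponent of $n$ becomes $d-k$ and the general term of the left-hand side reads $\binom{d}{k}\binom{k}{(k-\lambda)/2}n^{d-k}$, with $k\equiv\lambda\pmod 2$ ranging over $\lambda\le k\le d-1-\overline{\lambda}$, while the right-hand side becomes $\sum_{k\ \mathrm{even},\,0\le k\le d-1}\binom{d}{k}\binom{k}{k/2}n^{d-k}$. The first observation is that every inner binomial coefficient is dominated by the corresponding central one: since $0\le (k-\lambda)/2\le\lfloor k/2\rfloor$, we have $\binom{k}{(k-\lambda)/2}\le\binom{k}{\lfloor k/2\rfloor}$, which is $\binom{k}{k/2}$ for $k$ even and $\binom{k}{(k-1)/2}$ for $k$ odd. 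I would then split the argument according to the parity of $\lambda$.

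If $\lambda$ is even, the reindexed left-hand side is, after this central bound, a strict sub-sum of the right-hand side: it ranges over even $k\ge\lambda\ge 2$, whereas the right-hand side contains in addition the positive terms with $k=0,2,\dots,\lambda-2$, in particular the term $n^{d}$ coming from $k=0$. Hence strict inequality holds in this case, with no lower bound on $n$ needed.

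If $\lambda$ is odd, the central bound gives that the left-hand side is at most $S_1:=\sum_{k\ \mathrm{odd},\,1\le k\le d}\binom{d}{k}\binom{k}{(k-1)/2}n^{d-k}$, so it remains to prove $S_1<S_0$ for $n\ge d$, where $S_0$ denotes the right-hand side. I would do this by pairing the term of $S_0$ with $k=2j$ against the term of $S_1$ with $k=2j+1$, for $j=0,1,\dots,(d-1)/2$ (these indices match exactly), and writing
\begin{align*}
S_0-S_1=\sum_{j=0}^{(d-1)/2}n^{d-2j-1}\binom{d}{2j}\binom{2j}{j}\left(n-\frac{d-2j}{j+1}\right),
\end{align*}
where the ratio follows from the elementary simplification $\frac{\binom{d}{2j+1}\binom{2j+1}{j}}{\binom{d}{2j}\binom{2j}{j}}=\frac{d-2j}{j+1}$. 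Since $\frac{d-2j}{j+1}\le d$ with equality only at $j=0$, every summand is nonnegative for $n\ge d$, and the summand at $j=1$ (which exists because $d\ge 3$) is strictly positive; therefore $S_0-S_1>0$.

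The routine part consists of the two factorwise simplifications $\frac{\binom{d}{2j+1}}{\binom{d}{2j}}=\frac{d-2j}{2j+1}$ and $\frac{\binom{2j+1}{j}}{\binom{2j}{j}}=\frac{2j+1}{j+1}$, whose product collapses to $\frac{d-2j}{j+1}$. The main obstacle, and the only place where the hypothesis $n\ge d$ is genuinely used, is the odd case: the pairing is engineered so that the top pair ($j=0$) contributes exactly $n^{d-1}(n-d)$, which vanishes at $n=d$, so one must lean on the strictly positive lower-order pairs to obtain a strict inequality. Setting up the pairing and computing the coefficient ratio is the crux; the remainder is just bounding by central binomial coefficients and discarding positive terms.
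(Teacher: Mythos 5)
Your proof is correct and follows essentially the same route as the paper: both split on the parity of $\lambda$, handle the even case by termwise domination via central binomial coefficients plus the extra positive term $n^d$, and handle the odd case by pairing each odd-index term $k=2j+1$ with the even-index term $k=2j$ and using the binomial ratio together with $n\geq d$. The only differences are presentational (your reindexing by $k=2i+\overline{\lambda}$, the preliminary reduction to $\lambda=1$, and the exact ratio $\frac{d-2j}{j+1}$ in place of the paper's cruder bound $\frac{d-(\lambda-1)}{n}$).
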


\begin{proof}
We distinguish the two following cases:\begin{enumerate}
\item If $\overline{\lambda}=0$, we have \begin{align*}
\sum \limits_{i=0} ^{\frac{d-1}{2}} \binom{d}{2i}\binom{2i}{i}n^{d-2i}=\sum \limits_{i=0} ^{\frac{\lambda}{2}-1} \binom{d}{2i}\binom{2i}{i}n^{d-2i}+\sum \limits_{i=\frac{\lambda}{2}} ^{\frac{d-1}{2}} \binom{d}{2i}\binom{2i}{i}n^{d-2i}.
\end{align*}Since $\binom{2i}{i}\geq \binom{2i}{i-\frac{\lambda}{2}}$ for all $i \geq \frac{\lambda}{2}$, we have the result.
\item If $\overline{\lambda}=1$, we have \begin{align*}
\sum \limits_{i=0} ^{\frac{d-1}{2}} \binom{d}{2i}\binom{2i}{i}n^{d-2i}&=\sum \limits_{i=0} ^{\frac{\lambda-1}{2}-1} \binom{d}{2i}\binom{2i}{i}n^{d-2i}\\ &\qquad +\sum \limits_{i=\frac{\lambda-1}{2}} ^{\frac{d-1}{2}-1} \binom{d}{2i}\binom{2i}{i}n^{d-2i}+d\binom{d-1}{\frac{d-1}{2}}n,
\end{align*}
and \begin{align*}
\sum \limits_{i=\frac{\lambda-1}{2}}^{\frac{d-1}{2}-1}&\binom{d}{2i+1}\binom{2i+1}{i-\frac{\lambda-1}{2}} n^{d-2i-1}\\ &\qquad \leq \frac{d-(\lambda-1)}{n} \sum \limits_{i=\frac{\lambda-1}{2}}^{\frac{d-1}{2}-1}\binom{d}{2i}\binom{2i}{i-\frac{\lambda-1}{2}} n^{d-2i}.
\end{align*} Again we get the result for $n\geq d$. 
\end{enumerate}
\end{proof}

\begin{remark}
The condition $n\geq d$ in \Cref{interference} has no impact on the quality of the lower bound of the complexity since it does not depend on $\ell$.
\end{remark}

Now, for $k\geq 1$ consider \begin{align*}
t(k)=m_3L_{6k}-m_2L_{4k}+m_1L_{2k}+m_0L_0,
\end{align*}
with real parameters $m_i$, see~\cite{stoll2013}. For $d\geq 1$ set \begin{align*}
T_d(k)=t(k)^d=\sum \limits_{0\leq i \leq 3d}c_iL_{2ik}.
\end{align*}
Note that for $k$ sufficiently large, the coefficients $c_i$ are independent from $k$. 

\medskip

We will need the following auxiliary result~\cite[Lemma 4]{stoll2013}.
\begin{lem}[{\cite{stoll2013}}]
Let $M \geq 1$, and $m_0,m_1,m_2,m_3 \in \mathbb{R}$ with \begin{align*}
1\leq m_0,m_1,m_3<M, \\ 0<m_2<\frac{1}{d^3(32M)^d}.
\end{align*}
Then we have $c_{3d}>0$, $c_{3d-1}<0$ and $c_i>0$ for $i=0,1,\ldots,3\ell -2$.
\end{lem}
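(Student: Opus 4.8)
The plan is to convert the Lucas-number identity for $T_d(k)$ into a statement about Laurent polynomials, where the coefficients $c_i$ become ordinary polynomial coefficients that can be read off and estimated. Since $L_{2jk}=\varphi^{2jk}+\varphi^{-2jk}$, I would set $z=\varphi^{2k}$, so that each $L_{2jk}$ becomes $z^{j}+z^{-j}$ and the product formula of \Cref{Lucas} becomes the purely algebraic identity $(z^{a}+z^{-a})(z^{b}+z^{-b})=z^{a+b}+z^{-(a+b)}+z^{|a-b|}+z^{-|a-b|}$. With
\[
f(z)=m_3z^{3}-m_2z^{2}+m_1z+2m_0+m_1z^{-1}-m_2z^{-2}+m_3z^{-3},
\]
one has $t(k)=f(z)$ and $T_d(k)=f(z)^{d}$. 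Because the functions $L_{2ik}$ for $0\le i\le 3d$ are linearly independent in $k$, matching $\sum_i c_iL_{2ik}$ against the symmetric Laurent polynomial $f(z)^{d}=\sum_{|i|\le 3d}a_iz^{i}$ identifies $c_i=[z^{i}]f(z)^{d}$ for $i\ge1$ and $c_0=\tfrac12[z^{0}]f(z)^{d}$. Throughout I would split $f=g-h$ into its nonnegative part $g(z)=m_3z^{3}+m_1z+2m_0+m_1z^{-1}+m_3z^{-3}$ and the small part $h(z)=m_2(z^{2}+z^{-2})$.

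First I would settle the two top coefficients. The monomial $z^{3d}$ can only arise by taking $m_3z^{3}$ from each of the $d$ factors, so $c_{3d}=m_3^{d}>0$. For $z^{3d-1}$ the key remark is that $g$ carries only the exponents $\{-3,-1,0,1,3\}$, so no sum of $d$ of them equals $3d-1$; thus $3d-1$ is reachable only by using the term $-m_2z^{2}$ once together with $d-1$ copies of $m_3z^{3}$, giving $c_{3d-1}=-d\,m_2m_3^{d-1}<0$. This is exactly the exponent where $g^{d}$ has a gap, which is what forces the sign.

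Next I would prove $c_i>0$ for $0\le i\le 3d-2$. The combinatorial heart is to check that, even though $g$ skips the exponent $2$ near the top, every $i$ in this range is reachable from $\{-3,-1,0,1,3\}$: for $i\equiv0\ (\mathrm{mod}\ 3)$ use copies of $z^{3}$ and $z^{0}$, for $i\equiv1$ add a single $z^{1}$, for $i\equiv2$ add two $z^{1}$'s. As $m_0,m_1,m_3\ge1$ (and the constant term $2m_0\ge1$), each such contribution is positive, so $[z^{i}]g^{d}\ge1$. For the $h$-perturbation I would expand $f^{d}=\sum_{j=0}^{d}\binom{d}{j}(-m_2)^{j}(z^{2}+z^{-2})^{j}g^{d-j}$; every factor $(z^{2}+z^{-2})^{j}g^{d-j}$ has nonnegative coefficients, so each of its coefficients is at most its value at $z=1$, namely $2^{j}g(1)^{d-j}$ with $g(1)=2(m_0+m_1+m_3)<6M$. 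This gives
\[
\bigl|\,c_i-[z^{i}]g^{d}\,\bigr|\le\sum_{j=1}^{d}\binom{d}{j}m_2^{j}2^{j}g(1)^{d-j}=(g(1)+2m_2)^{d}-g(1)^{d}\le 2d\,m_2(8M)^{d-1},
\]
and the hypothesis $m_2<d^{-3}(32M)^{-d}$ makes the right-hand side strictly less than $1$. Since $[z^{i}]g^{d}\ge1$, it follows that $c_i>0$.

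I expect the main obstacle to be the reachability bookkeeping rather than any single estimate: one has to pin down precisely that $g^{d}$ produces strictly positive coefficients on the \emph{entire} range $0\le i\le 3d-2$ (so the minuscule $m_2$ cannot reverse a sign there) while leaving a genuine gap exactly at $i=3d-1$ (so that $c_{3d-1}$ is unavoidably negative). Once this picture is secured, the smallness condition on $m_2$ closes the argument routinely. (I read the misprinted ``$3\ell-2$'' in the statement as $3d-2$.)
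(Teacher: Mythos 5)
Your proof is correct; note at the outset that the paper itself offers no proof of this statement --- it is imported verbatim from \cite{stoll2013} (Lemma~4 there), and you are right that ``$3\ell-2$'' is a misprint for $3d-2$. Your route differs from the source in presentation rather than substance: Stoll works directly with the multinomial expansion of $t(k)^d$ and reduces products of Lucas numbers via the identities of \Cref{Lucas}, whereas you linearize everything at once through the substitution $z=\varphi^{2k}$, $L_{2jk}=z^j+z^{-j}$, so that the $c_i$ become coefficients of the symmetric Laurent polynomial $f(z)^d$ (with the factor $\tfrac{1}{2}$ at $i=0$ coming from $L_0=2$, and the identification justified by the linear independence of the $L_{2ik}$ as functions of $k$). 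This buys cleaner bookkeeping: the deficit argument pins down $c_{3d}=m_3^d$ and $c_{3d-1}=-d\,m_2m_3^{d-1}$ exactly, the residue-mod-$3$ reachability from the exponent set $\{0,1,3\}$ gives $[z^i]g^d\ge 1$ on all of $0\le i\le 3d-2$, and the perturbation bound $(g(1)+2m_2)^d-g(1)^d\le 2dm_2(8M)^{d-1}<1$ under the hypothesis on $m_2$ closes the argument. I checked the three places where something could go wrong --- the case $i\equiv 2\pmod 3$ needs $q\le d-2$, which indeed follows from $3q+2\le 3d-2$; the coefficient-versus-value-at-$1$ bound requires nonnegativity of the coefficients of $(z^2+z^{-2})^jg^{d-j}$, which holds; and $g(1)+2m_2<8M$ uses $M\ge 1$ --- and all are fine.
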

This lemma will be useful to construct two elements with different sums of digits in Zeckendorf base when we take their $d$th powers. Indeed, only the subdominant coefficient is negative so we are able to create a block of digits $\tL\tO\tL\tO\cdots\tL\tO$ of length $k$ for any $k$ large enough. Indeed, as we will state later, the transition from $k$ to $k+1$ adds exactly one block of $\tL\tO$. We have already used a similar method in the usual $2$-base, see \cite[proof of Lemma 6]{popoli2020}. 
 
In the following, let $\alpha \geq 1$ be an integer such that \begin{align*}
\varphi^{\alpha}>d^3\varphi(32\varphi)^d,
\end{align*}
and $m_2=1$ and $m_0,m_1,m_3$ integers such that \begin{align} \label{m0m1m3}
\varphi^{\alpha-1}\leq m_0,m_1,m_3<\varphi^{\alpha}.
\end{align}

Under these conditions, $T_d(k)$ and $T_d(k+1)$ have all positive integral coefficients with the only exception of the coefficient of $L_{2k(3d-1)}$ and $L_{2(k+1)(3d-1)}$ respectively, see~\cite{stoll2013}. Notice that these coefficients are the same for these two polynomials for $k$ large enough. 

\begin{lem} \label{key_monomial}
Let $d\geq 3$ be odd and $k\geq 0$ be a sufficiently large integer. Let $n$ be an integer such that $n\geq d$ and \begin{align} \label{3k} \sum \limits_{i=0} ^{(d-1)/2} \binom{d}{2i}\binom{2i}{i}n^{d-2i} < F_{3k}.
\end{align} Then we have 
\begin{align} s_Z((n+L_{6k+2})^d) &= s_Z((n+L_{6k+4})^d), \label{equality} \\ s_Z(T_d(k)) \bmod 2&\neq s_Z(T_d(k+1)) \bmod 2. \label{diff}
\end{align}
\end{lem}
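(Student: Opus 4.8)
The plan is to establish the two assertions \eqref{equality} and \eqref{diff} separately, using the centered-expansion property of $mL_k$ from \Cref{Lucas} together with the non-interference identity \eqref{noninterFibo}. For \eqref{equality}, I would first apply \Cref{decomposition} with $\ell = 6k+2$ and $\ell = 6k+4$ (both even) to write
\begin{align*}
(n+L_{\ell})^d = \underbrace{\sum_{i=0}^{(d-1)/2}\binom{d}{2i}\binom{2i}{i}n^{d-2i}}_{=:A(n)} + \sum_{\lambda=1}^{d} B_{\lambda}(n)\, L_{\lambda\ell},
\end{align*}
where $A(n)$ is the same ``central mass'' in both cases and $B_\lambda(n)$ are the coefficient-polynomials from \Cref{decomposition}, independent of $\ell$. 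The key point is that each summand $B_\lambda(n)L_{\lambda\ell}$ contributes, via the last statement of \Cref{Lucas}, a block of digits centered around $F_{\lambda\ell}$ whose width is controlled by the length of $B_\lambda(n)$. \Cref{interference} tells me that $B_\lambda(n) < A(n)$, hence each $B_\lambda(n) < F_{3k}$ by the hypothesis \eqref{3k}, so every block has half-width at most $3k$ around its center $F_{\lambda\ell}$. Since consecutive centers $F_{\lambda\ell}$ and $F_{(\lambda+1)\ell}$ are separated by $\ell \geq 6k+2$ in index, the gap between blocks is at least $6k+2 - 2\cdot 3k = 2 > 0$ (in fact I must check the gap is $\geq 2$ to ensure genuine non-interference in the Zeckendorf sense); thus all the blocks, and the central block from $A(n)$, are pairwise non-interfering. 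By \eqref{noninterFibo} the total sum of digits decomposes additively, and crucially the contribution from block $\lambda$ depends only on the digit-pattern of $B_\lambda(n)$ and not on where it is centered — replacing $\ell = 6k+2$ by $\ell = 6k+4$ merely translates each block rigidly without changing its internal digits. Hence $s_Z((n+L_{6k+2})^d) = s_Z((n+L_{6k+4})^d)$, giving \eqref{equality}.

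For \eqref{diff}, I would exploit the structure of $T_d(k) = t(k)^d = \sum_{0\le i\le 3d} c_i L_{2ik}$ together with the sign information supplied by \cite[Lemma 4]{stoll2013}: all coefficients $c_i$ are positive except the single subdominant one $c_{3d-1}<0$. The term $c_{3d-1}L_{2k(3d-1)}$ being subtracted forces a normalization exactly of the type illustrated in \Cref{Zeckendorf}: subtracting a Lucas number adjacent to $c_{3d}L_{6dk}$ produces, after carrying, an alternating block $\tL\tO\tL\tO\cdots\tL\tO$. The decisive claim is that the length of this alternating block grows by exactly one pair $\tL\tO$ when $k$ increments to $k+1$, because the index-gap $2(3d-1)k$ versus $6dk$ scales linearly in $k$ while the coefficients $c_i$ remain fixed for $k$ large. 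Each extra $\tL$ flips the parity of $s_Z$, so $s_Z(T_d(k)) \not\equiv s_Z(T_d(k+1)) \pmod 2$, which is \eqref{diff}.

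The main obstacle I anticipate is the careful bookkeeping of the carry propagation in \eqref{diff}: because Zeckendorf normalization is two-sided (via $2F_j = F_{j+1}+F_{j-2}$, as stressed in \Cref{Zeckendorf} and \Cref{remark}), I must verify that the normalization triggered by the lone negative coefficient does not collide with the positive blocks from the other $c_i L_{2ik}$, and that the resulting alternating block has precisely predictable length. This is where the specific choices $t(k) = m_3 L_{6k} - m_2 L_{4k} + m_1 L_{2k} + m_0 L_0$ and the magnitude constraints \eqref{m0m1m3} together with $\varphi^\alpha > d^3\varphi(32\varphi)^d$ are essential: they guarantee that the gaps between the centers $F_{2ik}$ (which grow like $2k$ in index) dominate the block half-widths (which are bounded in terms of the fixed coefficients $c_i$, independent of $k$), so that for $k$ sufficiently large the only interaction is the intended one between the two top coefficients $c_{3d}$ and $c_{3d-1}$. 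Establishing this clean separation, and then reading off that exactly one $\tL$-digit is gained per increment of $k$, is the technical heart of the argument; the remaining parity conclusion is then immediate.
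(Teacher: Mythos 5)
Your proposal is correct and follows essentially the same route as the paper: \eqref{equality} via \Cref{decomposition}, \Cref{interference} and the digit-range/non-interference argument with gap $\ell-6k=2$, and \eqref{diff} via the alternating block created by the lone negative subdominant coefficient of $T_d(k)$. The only difference is that the paper does not re-derive the alternating-block bookkeeping you flag as the ``technical heart'': it imports it wholesale from \cite[Lemma 3]{stoll2013}, which gives $s_Z(m_1L_{2k_1}-m_2L_{2k_2})=k_1-k_2+C(m_1,m_2)$ and immediately yields the parity flip.
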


\begin{proof}

Let $\ell\geq 0$ be an even integer, we have already proved in \Cref{decomposition} that for odd $d$ we have  \begin{align*}
(n+L_{\ell})^d&=\sum \limits_{i=0} ^{\frac{d-1}{2}} \eta_{i,0}n^{d-2i}+\sum \limits_{\lambda=1}^d\left(\sum \limits_{i=\frac{\lambda-\overline{\lambda}}{2}}^{\frac{d-1}{2}-\overline{\lambda}}\eta_{i,\lambda} n^{d-2i-\overline{\lambda}} \right) L_{\lambda \ell}.
\end{align*}

Condition \eqref{3k} and \Cref{interference} imply that all coefficients in front of each $L_{\lambda \ell}$, for $1\leq \lambda \leq d$, are $<F_{3k}$. Since the aim is to build a sum with non-interfering terms, we study for the range of digits of each term in the sum with the help of \Cref{Lucas}. Hence for sufficiently large $\ell$, the range of digits of \begin{align*} \left(\sum \limits_{i=\frac{\lambda-\overline{\lambda}}{2}}^{d/2-\overline{\lambda}}\binom{d}{2i+\overline{\lambda}}\binom{2i+\overline{\lambda}}{i-\frac{\lambda-\overline{\lambda}}{2}} n^{d-2i-\overline{\lambda}} \right) L_{\lambda \ell}
\end{align*} is included in the interval $\left[\lambda \ell - 3k,\lambda \ell + 3k\right]$ for $\lambda > 0$ and $\left[0,3k-1\right]$ for $\lambda=0$. If we suppose that these intervals are disjoints plus one small gap, see \Cref{remark}, we have a non-interfering sum, i.e when we suppose \begin{align*}
[0,3k-1]& \cap [ \ell -3k-1,\ell +3k+1]=\emptyset, \\
[ \ell -3k-1,\ell +3k+1] &\cap [2\ell -3k-1,2\ell +3k+1]=\emptyset, \\
&\ldots, \\
[(d-1)\ell-3k-1,(d-1)\ell+3k+1]&\cap [d\ell -3k-1,d\ell +3k+1]=\emptyset.
\end{align*} For this to happen, it is sufficient to suppose $\ell>6k$ and we have imposed $\ell$ even. So we can choose $\ell=6k+2$ in order to have a non-interfering sum. An identical proof works for $\ell=6k+4$ and \eqref{equality} is proved.

The second part follows directly from \cite[Lemma 3]{stoll2013}. For $m_1,m_2\geq 1$ and $k_1>k_2$ large enough, we have \begin{align*}
s_Z(m_1L_{2k_1}-m_2L_{2k_2})=k_1-k_2+C(m_1,m_2),
\end{align*}where $C(m_1,m_2)$ only depends on $m_1$ and $m_2$.
For $k$ sufficiently large we therefore have \begin{align*}s_Z(t(k)^d)&=s_Z(c_{3d}L_{6kd}-(-c_{3d-1})L_{2k(3d-1)}+\cdots+c_0L_0) \\ &=(3dk-k(3d-1))+C(c_{3d},c_{3d-1})+\kappa \\ &=k+C(c_{3d},c_{3d-1})+\kappa,
\end{align*}
for some $\kappa$ independent from $k$ and \begin{align*} s_Z(t(k+1)^d)&=s_Z(c_{3d}L_{6(k+1)d}-(-c_{3d-1})L_{2(k+1)(3d-1)}+\cdots+c_0L_0) \\ &=(3d(k+1)-(k+1)(3d-1))+C(c_{3d},c_{3d-1})+\kappa \\ &=k+1+C(c_{3d},c_{3d-1})+\kappa.
\end{align*}
This proves \eqref{diff}.

\end{proof}
We are now able to prove \Cref{Main_theorem_polynomial} in the particular case $P(X)=X^d$.
\begin{proof}[Proof of \Cref{Main_theorem_polynomial}]
We suppose the same hypotheses as in \Cref{key_monomial} and we choose $k\geq 0$ such that \begin{align}\label{choice_of_k}
t(k+1)<N\leq t(k+2).
\end{align}
Thus condition \eqref{3k} gives $n^d \ll F_{3k}$ and $n \ll F_k^{3/d}$. As we have already stated in \Cref{Method_Jansen}, we have \begin{align*}
M(\mathcal{S}_{Z,P},N) \gg F_k^{3/d}
\end{align*} since we have two blocks of length $\ll F_k^{3/d}$ with two different successors. It remains to ensure that \begin{align}\label{not_first_block}
t(k) \gg F_{k}^{3/d}+L_{6k+2},
\end{align} since we need $t(k)^d$ a successor of the non-interfering block. We have $t(k)\sim m_3L_{6k}\sim m_3\varphi^{6k}$ and $F_{k}^{3/d}+L_{6k+2}\sim L_{6k+2} \sim \varphi^{6k+2}$ as $k\to +\infty$. Since by \eqref{m0m1m3}, $m_3\geq d^3(32\varphi)^d$, we have $t(k)\gg 32^dd^3\varphi^{6k+d}$ for $k\to +\infty$. Therefore for sufficiently large $k$, \eqref{not_first_block} is satisfied. For the same reasons, we need \begin{align} \label{not_first_block_bis}
t(k+1) \gg F_{k}^{3/d}+L_{6k+4}.
\end{align}
A very similar proof of \eqref{not_first_block} gives \eqref{not_first_block_bis} for $k$ sufficiently large. 
Furthermore, \eqref{choice_of_k} gives \begin{align*}
N\leq t(k+2) \ll L_{6(k+2)} \ll F_{k}^6.
\end{align*} 
We finally get \begin{align*}
M(\mathcal{S}_{Z,P},N) \gg F_k^{3/d} \gg N^{1/(2d)},
\end{align*} 
and the theorem is proved if $d$ is odd. 

For $d$ even, we prove a similar result as \Cref{key_monomial} and a similar proof works for the same reasons (we omit the details). Thus \Cref{Main_theorem_polynomial} is proved in the case $P(X)=X^d$. 
\end{proof}

\subsection{Polynomial subsequences} We consider now the general case with $P(X)=\alpha_dX^d+\cdots+\alpha_0$ a polynomial such that 
$P(\mathbb{N}_0) \subset \mathbb{N}_0$ and $\alpha_d=1$. Such as before, we shall determine exactly $P(n+L_k)$ for the integers $n\geq 0$ et $k\geq 0$. 

\begin{lem}\label{decomposition2}
We have for integers $n\geq 0$ et $k\geq 0$, \begin{align*}
P(n+L_k)=\sum \limits_{0 \leq \lambda \leq d}\beta_{\lambda}(n)L_{\lambda k}
\end{align*} with $\beta_{\lambda}(n)=\sum \limits_{\lambda \leq i \leq d}\alpha_i \sum \limits_{j=\frac{\lambda-\overline{\lambda}}{2}}^{\lfloor \frac{i-\overline{\lambda}}{2} \rfloor }\gamma_{i,j,\lambda}n^{i-2j-\overline{\lambda}}$ and $ \gamma_{i,j,\lambda}=\binom{i}{2j+\overline{\lambda}}\binom{2j+\overline{\lambda}}{j-\frac{\lambda-\overline{\lambda}}{2}}$.
\end{lem}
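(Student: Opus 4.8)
The plan is to reduce the statement to \Cref{decomposition} applied one monomial at a time and then to regroup the Lucas numbers according to the multiple of $k$ in which they are indexed. Writing $P(X)=\sum_{i=0}^{d}\alpha_i X^i$, linearity gives $P(n+L_k)=\sum_{i=0}^{d}\alpha_i(n+L_k)^i$, so it suffices to control each power $(n+L_k)^i$ separately. I would expand binomially, $(n+L_k)^i=\sum_{m=0}^{i}\binom{i}{m}n^{i-m}L_k^m$, and then replace each power $L_k^m$ of a Lucas number by a linear combination of Lucas numbers $L_{\mu k}$ via \Cref{Lucas}(3)--(4). The essential structural fact is that $L_k^m$ only produces indices $\mu k$ with $\mu\equiv m \pmod 2$ and $0\le\mu\le m$; consequently the eventual coefficient of a fixed $L_{\lambda k}$ can receive contributions only from exponents $m$ of the same parity as $\lambda$.

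First I would carry out the reindexing for a single exponent $i$. Substituting $m=2j+\overline{\lambda}$ (so that $m$ runs over the integers of parity $\overline{\lambda}$ between $\lambda$ and $i$, i.e.\ $j$ runs from $(\lambda-\overline{\lambda})/2$ to $\lfloor(i-\overline{\lambda})/2\rfloor$), the coefficient of $L_{(m-2j')k}$ with $m-2j'=\lambda$ is $\binom{m}{j'}$ for $j'=j-(\lambda-\overline{\lambda})/2$; multiplied by the binomial coefficient $\binom{i}{m}=\binom{i}{2j+\overline{\lambda}}$ coming from the expansion of $(n+L_k)^i$ and by the monomial $n^{i-m}=n^{i-2j-\overline{\lambda}}$, this yields precisely $\gamma_{i,j,\lambda}\,n^{i-2j-\overline{\lambda}}$. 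This is exactly the content of \Cref{decomposition} read with exponent $i$ in place of $d$, so I would simply invoke that lemma termwise rather than redo the binomial bookkeeping.

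Summing over $i$ with weights $\alpha_i$ and collecting all terms carrying the factor $L_{\lambda k}$ then gives $\beta_\lambda(n)=\sum_{\lambda\le i\le d}\alpha_i\sum_{j}\gamma_{i,j,\lambda}\,n^{i-2j-\overline{\lambda}}$, which is the claimed formula. The two points that require care, and which I expect to be the only real obstacles, are the parity of $k$ and the role of $L_0$. \Cref{decomposition} was established for even $\ell$, where the signs $(-1)^{jk}$ appearing in \Cref{Lucas}(3)--(4) all equal $1$ and thus match the sign-free coefficients $\gamma_{i,j,\lambda}$; I would therefore either restrict to even $k$ (which is all that is used in building non-interfering sums, cf.\ the choices $L_{6k+2},L_{6k+4}$ in \Cref{key_monomial}) or else explicitly carry the signs $(-1)^{jk}$ through the computation. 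Separately, the term $\lambda=0$ must be treated on its own, since the central scalar $\binom{m}{m/2}(-1)^{mk/2}$ produced by the even powers equals $\tfrac12\binom{m}{m/2}(-1)^{mk/2}L_0$; one has to fix once and for all the convention by which this constant contribution is recorded in $\beta_0(n)L_0$, after which the regrouping goes through unchanged.
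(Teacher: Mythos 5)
Your proof takes essentially the same route as the paper, whose entire argument for this lemma is the remark that it is ``similar to \Cref{decomposition}'': expand each monomial $\alpha_i(n+L_k)^i$ binomially, convert the powers $L_k^m$ via \Cref{Lucas}, and regroup by the index $\lambda k$, exactly as you do. Your two caveats --- that the sign-free coefficients $\gamma_{i,j,\lambda}$ require $k$ even (which holds in every application, e.g.\ $L_{2d\mu k+2}$), and that the $\lambda=0$ term needs a fixed convention since $L_0=2$ (the paper itself writes the constant term as $\beta_0(n)$ rather than $\beta_0(n)L_0$ in its later display) --- are legitimate and correctly identify the only points where the statement as printed needs care.
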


\begin{proof}
The proof is similar to \Cref{decomposition}.
\end{proof}

\begin{lem} \label{interference2} Let $n\geq C_d$ and $\lambda \geq 1$ be integers with $C_d$ an absolute constant depending only on $d$. Then we have $\beta_{\lambda}(n)<\beta_0(n)$.
\end{lem}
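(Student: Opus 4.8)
The plan is to generalize the argument of \Cref{interference} from the monomial case to the full polynomial $P$. Recall from \Cref{decomposition2} that
\[
\beta_0(n)=\sum_{0\leq i\leq d}\alpha_i\sum_{j=0}^{\lfloor i/2\rfloor}\binom{i}{2j}\binom{2j}{j}n^{i-2j},
\qquad
\beta_\lambda(n)=\sum_{\lambda\leq i\leq d}\alpha_i\sum_{j=\frac{\lambda-\overline\lambda}{2}}^{\lfloor\frac{i-\overline\lambda}{2}\rfloor}\binom{i}{2j+\overline\lambda}\binom{2j+\overline\lambda}{j-\frac{\lambda-\overline\lambda}{2}}n^{i-2j-\overline\lambda}.
\]
The guiding observation is that both sums run over the same outer index $i$ (from $\lambda$ up to $d$ in $\beta_\lambda$, and from $0$ up to $d$ in $\beta_0$), so it suffices to compare the two inner sums \emph{level by level} in $i$ and then sum over $i$. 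For each fixed $i\geq\lambda$, the inner sum appearing in $\beta_0$ is precisely the $L_{0}$-coefficient of $(n+L_\ell)^i$ and the inner sum appearing in $\beta_\lambda$ is the $L_{\lambda\ell}$-coefficient of $(n+L_\ell)^i$; hence \Cref{interference} (applied with $d$ replaced by each individual $i$) already gives the strict inequality between these two inner sums whenever $n\geq i$.

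First I would fix an arbitrary $i$ with $\lambda\leq i\leq d$ and invoke \Cref{interference} with exponent $i$ in place of $d$ to obtain, for $n\geq i$,
\[
\sum_{j=\frac{\lambda-\overline\lambda}{2}}^{\lfloor\frac{i-\overline\lambda}{2}\rfloor}\binom{i}{2j+\overline\lambda}\binom{2j+\overline\lambda}{j-\frac{\lambda-\overline\lambda}{2}}n^{i-2j-\overline\lambda}
<
\sum_{j=0}^{\lfloor i/2\rfloor}\binom{i}{2j}\binom{2j}{j}n^{i-2j}.
\]
Since $\alpha_d=1>0$ is the leading coefficient, for $n$ large the contribution of the top degree term dominates; the potential difficulty is that the lower coefficients $\alpha_i$ may be negative, so I cannot simply multiply each level by $\alpha_i$ and add. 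The clean way around this is to absorb the lower-order terms into an error bound: each inner sum is a polynomial in $n$ of degree at most $i-\overline\lambda\leq i\leq d$, and the level-$d$ inequality (which has the genuinely dominant degree-$d$ term $\binom{d}{0}\binom{0}{0}n^d=n^d$ on the right, coming from $\alpha_d=1$) beats the sum of all the lower levels once $n$ exceeds a threshold depending only on $d$ and the $|\alpha_i|$.

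The main step is therefore to make this domination explicit: after applying the level-$d$ inequality, bound the $\beta_0(n)-\beta_\lambda(n)$ gap from below by the positive quantity (of degree $d$, or $d-1$ according to the parity matching $\overline\lambda$) coming from level $i=d$, and bound the combined contribution of all levels $i<d$ in absolute value by $C\,n^{d-1}$ for a constant $C=C(d,\alpha_0,\dots,\alpha_d)$. Choosing $C_d$ large enough that the positive degree-$d$ (or degree-$(d-1)$, depending on the parity of $\lambda$) term strictly exceeds this error for all $n\geq C_d$ and all $1\leq\lambda\leq d$ then yields $\beta_\lambda(n)<\beta_0(n)$. I expect the main obstacle to be precisely this bookkeeping with possibly negative $\alpha_i$: one must ensure the leading term surviving in $\beta_0(n)-\beta_\lambda(n)$ has strictly larger degree than, or the same degree but a controllable coefficient against, the accumulated lower-order error, uniformly in $\lambda$. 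Since the threshold depends only on $d$ and the coefficients of $P$ and not on $k$, as noted in the remark after \Cref{interference}, this restriction $n\geq C_d$ does not affect the quality of the complexity lower bound.
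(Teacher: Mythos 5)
The paper's own proof of this lemma is a single sentence (``The proof is again similar to \Cref{interference}''), and your proposal is a correct, more careful filling-in of exactly that idea: compare the inner sums level by level in $i$ and then control the effect of the coefficients $\alpha_i$. The dominance argument you add is genuinely needed, since $P(\mathbb{N}_0)\subset\mathbb{N}_0$ does not force the $\alpha_i$ to be nonnegative, so one cannot simply multiply each level-$i$ inequality by $\alpha_i$ and sum; absorbing all levels $i<d$ into an error of size $O(n^{d-1})$ and beating it with the level-$d$ gap is the right fix. Two small points. First, \Cref{interference} is stated only for odd exponents $\geq 3$, so invoking it ``with $d$ replaced by each individual $i$'' is not literally licensed for even $i$ or $i\leq 2$; but your dominance argument makes those invocations unnecessary, since the lower levels are only bounded in absolute value, and at level $d$ the crude degree count already suffices ($A_d(n)$ has leading term $n^d$ while $B_{d,\lambda}(n)$ has degree $d-\lambda\leq d-1$, so $A_d(n)-B_{d,\lambda}(n)\gg n^d$). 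Second, as you note, the resulting threshold depends on $d$ \emph{and} on $\max_i|\alpha_i|$, so the lemma's phrasing ``absolute constant depending only on $d$'' should really be ``depending only on $P$''; this is a defect of the statement rather than of your argument, and it is harmless since the implied constant in \Cref{Main_theorem_polynomial} is allowed to depend on $P$.
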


\begin{proof}
The proof is again similar to \Cref{interference}.
\end{proof}

Let $\mu$ be an integer. We have by \Cref{decomposition2} \begin{align} \label{Polysum}
P(n+L_{2d\mu k+2})=\beta_0(n)+\beta_1(n)L_{2d\mu k+2}+\cdots+\beta_d(n)L_{d(2d\mu k+2)}.
\end{align}

\begin{remark}
We introduce $\mu$ since it is sufficient to adjust the non-interfering block to have the same result. Later, we shall take $\mu$ depending only on $d$. \end{remark}

To prove a lower bound on the \textit{Nth maximum order complexity} of $\mathcal{S}_{Z,P}$, we look for an analogue of \Cref{key_monomial}. The analogue of the first part of this lemma is described as follows.

\begin{lem} \label{key_polynomial}
Let $k\geq 0$ be a sufficiently large integer and $r>1$. For any integer $C_d<n<F_{\mu k}$ we have \begin{align}
s_Z(P(n+L_{2d\mu k+2}))=s_Z(P(n+L_{2d\mu k+2r})).
\end{align}
\end{lem}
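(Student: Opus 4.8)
The plan is to mirror the proof of the equality \eqref{equality} in \Cref{key_monomial}, but now using the more flexible shift $L_{2d\mu k + 2}$ versus $L_{2d\mu k + 2r}$, and to show that in both cases the expansion of $P(n+L_{\ell})$ decomposes as a \emph{non-interfering} sum over the Lucas blocks $L_{\lambda\ell}$. First I would invoke \Cref{decomposition2} to write, for an even shift $\ell$,
\begin{align*}
P(n+L_{\ell})=\sum_{0\leq\lambda\leq d}\beta_{\lambda}(n)L_{\lambda\ell},
\end{align*}
and then use \Cref{interference2} together with the hypothesis $n<F_{\mu k}$ to bound every coefficient $\beta_{\lambda}(n)$ by $\beta_0(n)$, and in turn by a fixed power of $F_{\mu k}$. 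The point of this bound is exactly as in \Cref{key_monomial}: by the final statement of \Cref{Lucas}, each coefficient of size $<F_{c\mu k}$ (for a suitable constant $c$ depending on $d$) forces the range of digits of the block $\beta_{\lambda}(n)L_{\lambda\ell}$ to sit inside an interval of the form $[\lambda\ell - c\mu k,\ \lambda\ell + c\mu k]$, with the $\lambda=0$ block living in $[0,\ c\mu k -1]$.

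The next step is the separation of these intervals. With $\ell=2d\mu k+2$, consecutive blocks are centered at $0,\ell,2\ell,\dots,d\ell$, so I would check that the window half-width $c\mu k$ is strictly smaller than $\ell/2 = d\mu k +1$ for $k$ large (this is where the factor $2d$ in the shift is designed to give room for all $d$ blocks), guaranteeing the disjointness conditions
\begin{align*}
[(\lambda-1)\ell - c\mu k -1,\ (\lambda-1)\ell + c\mu k +1]\cap[\lambda\ell - c\mu k -1,\ \lambda\ell + c\mu k +1]=\emptyset,\quad 1\leq\lambda\leq d,
\end{align*}
together with the extra gap of at least two demanded by the transversal carry in \Cref{remark} and \eqref{noninterFibo}. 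Once the blocks are non-interfering, the additivity relation \eqref{noninterFibo} gives $s_Z(P(n+L_{\ell}))=\sum_{\lambda}s_Z(\beta_{\lambda}(n)L_{\lambda\ell})$, and \emph{each individual summand} $s_Z(\beta_{\lambda}(n)L_{\lambda\ell})$ depends only on $\beta_{\lambda}(n)$ and not on $\ell$ once $\ell$ is large enough (again by the shift-invariance built into \Cref{Lucas}). Since $\beta_{\lambda}(n)$ is independent of the shift, the total $s_Z(P(n+L_{\ell}))$ is the same for $\ell=2d\mu k+2$ and $\ell=2d\mu k+2r$, which is precisely the claimed equality.

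The main obstacle I anticipate is the careful accounting of the constant $c$ governing the digit window and verifying that it is uniform over all $\lambda$ and all admissible $n$. Concretely, one must confirm that the bound $\beta_{\lambda}(n)<\beta_0(n)\ll n^{d}\ll F_{\mu k}^{\,d}$ is compatible with the digit-range estimate of \Cref{Lucas}, whose window size is controlled by the \emph{length} of the coefficient rather than its magnitude, so I would need $F_{\mu k}^{\,d}$ to have length of order $\mu k$, i.e. $d\cdot\mu k \leq$ (a controlled multiple of $\mu k$); this is why the shift is scaled by $2d$ and why $\mu$ is allowed to depend on $d$. A secondary subtlety is that the relevant shifts must be even for \Cref{decomposition2} to apply in the stated form, which forces $2d\mu k+2r$ with $r>1$ an integer to remain even—automatically satisfied—but I would state explicitly that $r$ ranges over integers so that both endpoints are legitimate even Lucas indices. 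Apart from these bookkeeping points, the argument is structurally identical to \Cref{key_monomial}, and I would remark that the proof carries over verbatim.
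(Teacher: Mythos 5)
Your proposal follows essentially the same route as the paper's proof: decompose $P(n+L_{\ell})$ via \Cref{decomposition2}, bound the coefficients $\beta_{\lambda}(n)$ using \Cref{interference2} and the hypothesis $n<F_{\mu k}$, and conclude non-interference of the blocks $\beta_{\lambda}(n)L_{\lambda\ell}$ exactly as in \Cref{key_monomial}, so that $s_Z(P(n+L_{\ell}))$ splits into shift-independent contributions. The paper's own proof is a two-line reduction to the monomial case; you supply the interval bookkeeping it leaves implicit, and the subtleties you flag (the additive constant in the digit-window width, the parity of the shift) are present at the same level in the paper's argument.
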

\begin{proof}
For $n<F_{\mu k}$ we have $\beta_0(n) \ll L_{d\mu k}$. Then the terms in \eqref{Polysum} are non-interfering for $k$ large enough such as in the proof of \Cref{key_monomial} where \Cref{interference} is replaced by \Cref{interference2}. Thus we have  \begin{align*}
s_Z(P(n+L_{2d\mu k}))=s_Z(\beta_0(n))+s_Z(\beta_1(n))+\cdots+s_Z(\beta_d(n)).
\end{align*}
Under the same hypothesis, we have for any $r>1$, \begin{align*}
s_Z((P(n+L_{2d\mu k+2r}))=s_Z(\beta_0(n))+s_Z(\beta_1(n))+\cdots+s_Z(\beta_d(n)).
\end{align*}
Therefore, for any $n<F_{\mu k}$, $k$ large enough, and $r>1$, we have \begin{align*}
s_Z(P(n+L_{2d\mu k}))=s_Z((P(n+L_{2d\mu k+2r})).
\end{align*} 
Thus the lemma is proved.
\end{proof}

In order to obtain the analogue of the second part of \Cref{key_monomial}, we are now looking for an integer $n$ of the form $L_{\lambda}$ for some $\lambda$ and $r>1$ such that \begin{align} \label{lambda}
s_Z(P(L_{\lambda}+L_{2d \mu k+2})) \neq s_Z(P(L_{\lambda}+L_{2d \mu k+2r})).
\end{align}
According to \Cref{key_polynomial}, we need $\lambda \geq \mu k$. Note that we want to prove that $M(\mathcal{S}_{Z,P},N)\gg N^{1/(2d)}$ in order to have the same bound as the one for the monomial case. The following result gives sufficient conditions for the size of $\lambda$ and $r$ for this bound.

\begin{lem}
If $\lambda\leq 2d\mu k$ and $r$ is constant, then we have $M(\mathcal{S}_{Z,P},N)\gg N^{1/(2d)}$.
\end{lem}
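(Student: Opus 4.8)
The plan is to assemble the pieces already established—Lemma~\ref{key_polynomial} (coincidence of the two non-interfering blocks), the prospective discrepancy condition~\eqref{lambda}, and Jansen's criterion (Lemma~\ref{Method_Jansen})—into the lower bound $M(\mathcal{S}_{Z,P},N)\gg N^{1/(2d)}$. First I would fix $n=L_\lambda$ with $\mu k\le \lambda\le 2d\mu k$, so that Lemma~\ref{key_polynomial} applies and the two sequences $(s_Z(P(j+L_{2d\mu k+2})))_{C_d<j<F_{\mu k}}$ and $(s_Z(P(j+L_{2d\mu k+2r})))_{C_d<j<F_{\mu k}}$ agree term by term, giving a common factor of length $L(N)\gg F_{\mu k}$. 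By~\eqref{lambda} these two identical blocks are followed by different successors, so Lemma~\ref{Method_Jansen} yields $M(\mathcal{S}_{Z,P},N)\ge L(N)+1\gg F_{\mu k}$.

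**Calibrating the parameters against $N$.**
Next I would relate $F_{\mu k}$ to $N$. The largest argument appearing is $P(L_\lambda+L_{2d\mu k+2r})$, whose dominant term is of order $L_{2d\mu k+2r}^d\asymp \varphi^{d(2d\mu k+2r)}$. Since $r$ is constant and $\mu$ depends only on $d$, this is $\asymp \varphi^{2d^2\mu k}$, so one chooses $k$ with $N\asymp \varphi^{2d^2\mu k}$, whence $F_{\mu k}\asymp\varphi^{\mu k}\asymp N^{1/(2d^2)}$. That would only give exponent $1/(2d^2)$, so the point of the hypotheses $\lambda\le 2d\mu k$ and $r$ constant is exactly to let the block length be governed by the larger quantity $L_\lambda$ rather than by $F_{\mu k}$: taking $\lambda$ as large as permitted, namely $\lambda\asymp 2d\mu k$, the non-interfering block can be run up to arguments of size $\asymp\varphi^{2d\mu k}=N^{1/d}$ worth of integers, and the block \emph{length} in the resulting subsequence of $\mathcal{S}_{Z,P}$ scales like $N^{1/(2d)}$ after accounting for the quadratic spread of digits. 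I would make this precise by writing $N\asymp\varphi^{2d\lambda}$ and the block length $\asymp\varphi^{\lambda}\asymp N^{1/(2d)}$.

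**The main obstacle.**
The delicate step—and the one I expect to cause the most trouble—is verifying~\eqref{lambda}, i.e.\ exhibiting a single $n=L_\lambda$ for which the parity of $s_Z$ of the two successors genuinely differs. Establishing a coincidence (Lemma~\ref{key_polynomial}) is robust because it only needs non-interference, but forcing a parity \emph{difference} requires tracking the negative subdominant Lucas coefficient through the carry normalization, exactly as in the monomial case of Lemma~\ref{key_monomial}, where the transition created a block $\tL\tO\tL\tO\cdots$ whose length changed by one. Here the extra polynomial terms $\alpha_0,\dots,\alpha_{d-1}$ contribute further non-interfering digit blocks, and I must argue that their combined digit-sum contribution is the same for both successors (so that the parity difference is localized in the top Lucas power) while the leading monomial still produces the parity flip. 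Concretely I would reduce~\eqref{lambda} to the leading-term computation of Lemma~\ref{key_monomial} by choosing $\lambda$ so that $L_\lambda^d$ dominates and its $t(k)$-type expansion carries the sign pattern, then check the lower-order $\alpha_i L_\lambda^i$ pieces remain non-interfering with it and contribute identically to both sides.

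**Conclusion.**
Finally I would combine the two estimates: the block length is $\gg N^{1/(2d)}$ by the parameter calibration, and by Lemma~\ref{Method_Jansen} together with~\eqref{lambda} this length plus one is a lower bound for $M(\mathcal{S}_{Z,P},N)$, giving
\begin{align*}
M(\mathcal{S}_{Z,P},N)\gg N^{1/(2d)},
\end{align*}
with the implied constant depending only on $P$ through $d$, $\mu$, $r$ and $C_d$. The remaining bookkeeping is to confirm that the chosen $\lambda$ indeed satisfies $\mu k\le\lambda\le 2d\mu k$ simultaneously with the size constraint $n=L_\lambda<F_{\mu k}$ failing to bind—here one uses that the constraint $n<F_{\mu k}$ in Lemma~\ref{key_polynomial} applies to the \emph{coinciding} range while the discrepancy argument uses the boundary value, so the two do not conflict—and that all asymptotic $\ll$ constants are absorbed into the dependence on $P$.
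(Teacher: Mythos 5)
Your overall architecture (Jansen's criterion, the coinciding blocks from \Cref{key_polynomial}, the distinct successors from \eqref{lambda}, block length $\gg F_{\mu k}$) matches the paper's, but the calibration of $N$ against $k$ --- which is the entire content of this lemma --- goes wrong. The sequence $\mathcal{S}_{Z,P}=(s_Z(P(n)) \bmod 2)_{n\geq 0}$ is indexed by $n$, so the quantity to compare with $N$ is the largest \emph{index} used in the construction, namely $L_{\lambda}+L_{2d\mu k+2r}$, not the largest \emph{value} $P(L_{\lambda}+L_{2d\mu k+2r})$ whose Zeckendorf digits are being summed. Your first computation takes the latter, obtains $N\asymp \varphi^{2d^2\mu k}$, and lands on the exponent $1/(2d^2)$; the correct choice is $k$ with $L_{\lambda}+L_{2d\mu k+2r}<N\leq L_{\lambda}+L_{2d\mu (k+1)+2r}$, which under the two hypotheses gives $N\ll \varphi^{\lambda}+\varphi^{2d\mu k+2r}\ll \varphi^{2d\mu k}\asymp F_{\mu k}^{2d}$, hence $M(\mathcal{S}_{Z,P},N)\gg F_{\mu k}\gg N^{1/(2d)}$ directly. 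That is precisely what the hypotheses $\lambda\leq 2d\mu k$ and $r$ constant are for: they cap the largest index at $F_{\mu k}^{2d}$.

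The patch you then propose does not repair this. The length of the coinciding blocks is $F_{\mu k}$, i.e.\ the range $C_d<n<F_{\mu k}$ supplied by \Cref{key_polynomial}; it is not ``governed by $L_{\lambda}$,'' and taking $\lambda$ as large as permitted does not lengthen the block to $\varphi^{\lambda}$ --- \Cref{key_polynomial} says nothing about coincidence for $F_{\mu k}\leq n<L_{\lambda}$. Your closing calibration ``$N\asymp\varphi^{2d\lambda}$ and block length $\asymp\varphi^{\lambda}$'' is also internally inconsistent with your own choice $\lambda\asymp 2d\mu k$: then $\varphi^{\lambda}\asymp F_{\mu k}^{2d}\asymp N$, which would assert $M\gg N$. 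Finally, your third paragraph addresses the verification of \eqref{lambda}, but that is not part of this lemma: the statement is conditional on \eqref{lambda} holding, and the existence of an admissible pair $(\lambda,r)$ (via \Cref{two_Lucas_numbers} and the choice $\mu=2d-1$, $\lambda=2dk$, $r=2$) is carried out afterwards in the proof of \Cref{Main_theorem_polynomial}.
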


\begin{proof}
If we have $\lambda$ such that \eqref{lambda} is verified, we show in the same way as before that $$M(\mathcal{S}_{Z,P},N) \geq F_{\mu k},$$ where $k$ is chosen in a way that \begin{align*}
L_{\lambda}+L_{2d\mu k+2r}<N\leq L_{\lambda}+L_{2d\mu (k+1)+2r}.
\end{align*}
This implies \begin{align*}
N &\ll L_{\lambda}+L_{2d\mu k+2r}\\
& \ll F_{\mu k}^{ \lambda/\mu k}+F_{\mu k}^{2d+2r/ \mu k}
\end{align*}
If $\lambda \leq 2d\mu k$ and $r$ is constant, we have $N\ll F_{\mu k}^{2d}$ and $M(\mathcal{S}_{Z,P},N)\gg N^{1/(2d)}$. Notice that $\mu$ independent from $k$ is also required to have this result. 
\end{proof}

We are now looking for $(\lambda,r)$ such that $\mu k \leq \lambda \leq 2d\mu k$, $r$ constant and \eqref{lambda} is verified. 

As we will state later, only two Lucas numbers will interfere. The following lemma describes all the possibilities for this interference. 
\begin{lem}\label{two_Lucas_numbers}
Let $k,\ell$ be two integers such that $k\geq \ell \geq 5$. We have \begin{align*}
s_Z(L_k+L_{\ell}) \equiv 1 \pmod 2 \quad \Longleftrightarrow \quad k=\ell +2.
\end{align*}
\end{lem}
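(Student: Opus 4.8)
The plan is to understand the Zeckendorf expansion of $L_k + L_\ell$ directly and count the number of $\tL$-digits modulo $2$. Recall the basic relation $L_j = F_{j+1} + F_{j-1}$ from the excerpt, so each Lucas number is a sum of exactly two non-adjacent Fibonacci numbers. Thus the naive (non-normalized) expansion of $L_k + L_\ell$ has four terms $F_{k+1} + F_{k-1} + F_{\ell+1} + F_{\ell-1}$, and the whole problem reduces to tracking how these four Fibonacci indices collide and how the resulting carries (governed by $2F_j = F_{j+1} + F_{j-2}$, as in \Cref{Zeckendorf}, and by the adjacency rule $F_{j+1} + F_j = F_{j+2}$) propagate before the expansion becomes admissible.

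First I would reduce to the case where the indices $\{\ell-1,\ell+1\}$ and $\{k-1,k+1\}$ overlap or abut, since when $k$ is far from $\ell$ (precisely $k - 1 > \ell + 2$, i.e. $k \ge \ell + 4$) the two pairs are non-interfering in the sense of \eqref{noninterFibo}, giving $s_Z(L_k + L_\ell) = s_Z(L_k) + s_Z(L_\ell) = 4 \equiv 0 \pmod 2$. The interesting regime is therefore $\ell \le k \le \ell + 3$, which splits into the cases $k = \ell$, $k = \ell+1$, $k = \ell+2$, $k = \ell+3$. In each case I would write out the four-term sum, resolve the collisions, and normalize. For instance, when $k = \ell+2$ the indices are $\{\ell-1,\ell+1,\ell+1,\ell+3\}$; the repeated $F_{\ell+1}$ forces a carry $2F_{\ell+1} = F_{\ell+2} + F_{\ell-1}$, which then interacts with the existing $F_{\ell-1}$ and $F_{\ell+3}$, and I would chase this to the admissible form and count its digits, expecting an odd total. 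The hypothesis $\ell \ge 5$ guarantees all indices stay in the range $\ge 2$ where $2F_j = F_{j+1} + F_{j-2}$ is valid, so no degenerate boundary behavior occurs.

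The cleanest organization is to prove both implications through this case analysis: show that $k = \ell + 2$ yields $s_Z(L_k + L_\ell) \equiv 1 \pmod 2$, and that each of the remaining cases ($k = \ell$, $k = \ell+1$, $k = \ell+3$, and $k \ge \ell+4$) yields an even digit sum. Since $L_k + L_\ell$ is symmetric only trivially here (we have assumed $k \ge \ell$), this finite check of a fixed number of cases establishes the biconditional. A convenient shortcut for the ``$\Longleftarrow$'' direction is to use the telescoping identity $L_{\ell+2} + L_\ell = L_{\ell+1} + 2L_\ell = \cdots$, or more directly the Lucas identity from \Cref{Lucas}(1)–(2), to express $L_{\ell+2} + L_\ell$ in a form whose Zeckendorf expansion is transparent; for example, relating it to a single shifted Fibonacci block whose digit sum is manifestly odd.

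The main obstacle I anticipate is the bookkeeping of nested carry propagations in the collision cases, since a carry produced at index $j$ can cascade both upward (via $F_{j+1} + F_j = F_{j+2}$ when an adjacency is created) and downward (via $2F_j = F_{j+1} + F_{j-2}$), exactly the two-sided propagation emphasized in \Cref{remark}. I would handle this by working with small symbolic blocks of digits rather than individual positions, and by verifying that after at most two normalization steps the expansion stabilizes, so that the parity of the digit count can be read off reliably. Care is also needed to confirm that the constraint $\ell \ge 5$ is genuinely what is required to keep every intermediate index admissible and to avoid the small-index exceptions where $F_0 = 0$ or the recurrence $2F_j = F_{j+1} + F_{j-2}$ breaks down.
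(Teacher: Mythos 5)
Your plan is correct and follows essentially the same route as the paper's proof: reduce to the non-interfering case $k\geq \ell+4$ (digit sum $4$, hence even) and then check the four remaining cases $k=\ell,\ell+1,\ell+2,\ell+3$ by explicitly normalizing $F_{k+1}+F_{k-1}+F_{\ell+1}+F_{\ell-1}$ via the carries $2F_j=F_{j+1}+F_{j-2}$ and $F_{j+1}+F_j=F_{j+2}$. The only difference is that you sketch the carry-chasing rather than writing out the four digit tableaux, but the computations go through exactly as you anticipate.
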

\begin{proof}
We have $L_k=F_{k+1}+F_{k-1}$ and $L_{\ell}=F_{\ell+1}+F_{\ell-1}$. So we shift two blocks of $\tL\tO\tL$ one against the other and look for transversals in the carry propagations. Note that if $k\geq \ell +4$, there is no carry propagation at all and $s_Z(L_k+L_{\ell})=4 \equiv 0 \pmod 2$. By symmetry, there remain only four cases to consider. We suppose $\ell \geq 5$: \begin{itemize}
\item Case $k=\ell$:  \[\begin{tabular}{cccccccccll}
& & & $\tL$  & $\tO$ & $\tL$\\
+&   & & $\tL$ & $\tO$ & $\tL$ \\
\hline
=&&& $\tD$ & $\tO$ & $\tD$ &  \\
\hline 
=&& $\tL$ & $\tO$ & $\tL$ & $\tL$ & $\tO$ & $\tL$\\ 
\hline
=&$\tL$ & $\tO$ & $\tO$ & $\tO$ & $\tO$ & $\tO$  & $\tL$ \\
\end{tabular}\]Then we have $s_Z(L_k+L_{\ell})\equiv 0 \pmod 2$. 
\item Case $k=\ell +1$:   \[\begin{tabular}{cccccccccll}
& & $\tL$  & $\tO$ & $\tL$\\
+&  &  & $\tL$ & $\tO$ & $\tL$ \\
\hline
=&$\tL$ & $\tO$ & $\tL$ & $\tO$ & $\tO$ & &  \\
\end{tabular}\]Then we have $s_Z(L_k+L_{\ell})\equiv 0 \pmod 2$. 
\item Case $k=\ell +2$:  \[\begin{tabular}{cccccccccll}
& & $\tL$  & $\tO$ & $\tL$\\
+& & &  & $\tL$ & $\tO$ & $\tL$ \\
\hline
=&& $\tL$ & $\tL$ & $\tO$ & $\tO$ & $\tD$ \\
\hline 
=&$\tL$ & $\tO$ & $\tO$ & $\tO$ & $\tL$ & $\tO$ & $\tO$ & $\tL$ \\
\end{tabular}\]Then we have $s_Z(L_k+L_{\ell})\equiv 1 \pmod 2$. 
\item Case $k=\ell +3$:  \[\begin{tabular}{cccccccccll}
& & $\tL$  & $\tO$ & $\tL$\\
+& & & &  & $\tL$ & $\tO$ & $\tL$ \\
\hline
=&& $\tL$ & $\tL$ & $\tO$ & $\tO$ & $\tO$ & $\tL$ \\
\hline 
=&$\tL$ & $\tO$ & $\tO$ & $\tO$ & $\tO$ &   $\tO$ & $\tL$ \\
\end{tabular}\]Then we have $s_Z(L_k+L_{\ell})\equiv 0 \pmod 2$. 
\end{itemize}
Thus the lemma is proved. 
\end{proof}

\begin{proof}[Proof of \Cref{Main_theorem_polynomial}]
We now investigate the interferences in 
\begin{align*} 
P(L_{\lambda}+L_{2d\mu k+2})=\beta_0(\lambda)+\beta_1(\lambda)L_{2d\mu k+2}+\cdots+\beta_d(\lambda)L_{d(2d\mu k+2)}.
\end{align*}
For each $\beta_i(\lambda)L_{i(2d\mu k+2)}$ we locate the least significant digit and the most significant digit. Then we have the following possible interferences \begin{align*}
L_{\lambda d}&+(-1)^{(d-1)\lambda}L_{2d\mu k+2 -(d-1)\lambda} \\
L_{2d\mu k+2 +(d-1)\lambda}&+(-1)^{(d-2)\lambda}L_{2(2d\mu k+2) -(d-2)\lambda} \\ &\cdots \\ L_{(d-1)(2d\mu k+2)+\lambda}&+L_{d(2d\mu k+2)}
\end{align*} since $\alpha_d=1$.

We will now choose $\lambda$ even. Thus we have for the first interference $L_{\lambda d}+L_{2d\mu k -(d-1)\lambda+2}$. By \Cref{two_Lucas_numbers} we have for sufficiently large $k$ \begin{align}
s_Z(L_{\lambda d}+L_{2d\mu k -(d-1)\lambda+2})=1 &\Leftrightarrow 2d\mu k-(d-1)\lambda+2=\lambda d +2 \nonumber \\&\Leftrightarrow \lambda=\frac{2d\mu k}{2d-1}. \label{final mu}
\end{align}

Hence it is sufficient to take $\mu=2d-1$ and $\lambda=2dk$. Furthermore, all other possible interferences are non-existing by this choice of $\lambda$ and $\mu$. We finally need to check if $\lambda$ satisfies all the conditions: $\lambda$ is even and $(2d-1)k \leq \lambda \leq 2d(2d-1)k$. Furthermore we can take $r=2$ for example and all the appearing blocks do not interfere. Summing up, we have proved that $s_Z\left(P\left(L_{\lambda}+L_{2d(2d-1) k +2}\right)\right)\neq s_Z\left(P\left(L_{\lambda}+L_{2d(2d-1) k +4}\right)\right)$. Thus, we have pro\-ved the general case of \Cref{Main_theorem_polynomial}.
\end{proof}

\begin{remark}
The choice of $L_{2d\mu k+2}$ in the proof is motivated to have an integer solution for $\lambda$ in \eqref{final mu}. Indeed, if we had chosen $L_{2d\mu k}$ instead, \eqref{final mu} would not have provided an integer for all $k$. 
\end{remark}

\section{Final remarks}\label{sec_final_remarks}

\subsection{Generalizations}
Our result can be generalized to other numeration systems without new methods. Let us first comment on Ostrowski's $\alpha$-numeration system, see \cite[Theorem 3.9.1]{alloucheshallit2003}, related to the continued fraction of an irrational number. 

\begin{Def}[Continued fraction]
Let $\alpha$ be a real number, we define the sequence $(a_i)_i$ such that \[\alpha=a_0+\frac{1}{a_1+\frac{1}{a_2+\cdots}}=\left[a_0,a_1,\ldots\right]
\]
and we define $p_{-2}=0$, $p_{-1}=1$, $q_{-2}=1$, $q_{-1}=0$,  and $p_n=a_np_{n-1}+p_{n-2}$, $q_n=a_nq_{n-1}+q_{n-2}$. Then we have $\frac{p_n}{q_n}=\left[a_0,a_1,\ldots,a_n\right]$. The sequence $(q_n)_n$ is called denominators of the convergents of the continued fraction of $\alpha$. 
\end{Def}

\begin{Def}[Ostrowski's $\alpha$-numeration system] 
Let $\alpha$ be a non rational real number, and let $(q_n)$ be the sequence of the denominators of the convergents of the continued fraction of $\alpha$. Then every non-negative integer $n$ can be represented uniquely in the form \begin{align*}
n=\sum \limits_{0\leq i \leq j}\varepsilon_i q_i
\end{align*}
where the $\varepsilon_i$ are integers satisfying the following conditions: \begin{enumerate}
\item $0\leq b_0<a_1$
\item $0\leq b_i \leq a_{i+1}$, for $i\geq 1$. 
\item For $i\geq 1$, if $b_i=a_{i+1}$ then $b_{i-1}=0$.
\end{enumerate}
\end{Def}

For the specific case $\alpha=\varphi$, we have the Zeckendorf expansion since $\varphi=[1,1,1,\ldots]$. 

Our result might be generalized to a quadratic irrational $\alpha$ such that the continued fraction of $\alpha$ is of the form $\alpha=[1,a,a,\ldots]$. In this particular case, we have $q_{n+2}=\frac{1}{a}q_{n+1}+q_n$. We denote by $\gamma$ the zero of the polynomial $x^2-\frac{1}{a}x-1$ such that its Galois conjugate $\overline{\gamma}$ verifies $|\overline{\gamma}|<|\gamma|$. Thus we have $q_n=\frac{1}{\sqrt{a^{-2}+4}}(\gamma^n-\overline{\gamma}^n)$, an analogous formula that for Fibonacci numbers. We define an analogue of Lucas numbers by $L'_n=\gamma^n+\overline{\gamma}^n$ for all $n\geq 0$. Then, in this particular case, our new notion of ``Lucas numbers'' verifies $L'_n=q_{n+1}+q_{n-1}$ and an analogous formula of \Cref{Lucas} holds true. So it might be possible to generalize our result in this case since no more input in the proof is needed. 

\subsection{Conjectures}\label{secConjecture}
We can compute the maximum order complexity thanks to Blumer's DAWG (Direct Acyclic Weighted Graph) algorithm (see~\cite{blumerblumerhausslerehrenfeuchtchenseiferas1985} and~\cite{jansen1989}). The C++ program that computes the maximum order complexity of a sequence is available on the web page of the second author.\footnote{\url{https://iecl.univ-lorraine.fr/membre-iecl/popoli-pierre/}} By using this program, we are able to formulate some heuristically supported conjectures. 

\begin{figure}[H]
\centering
\includegraphics[width=0.75\textwidth]{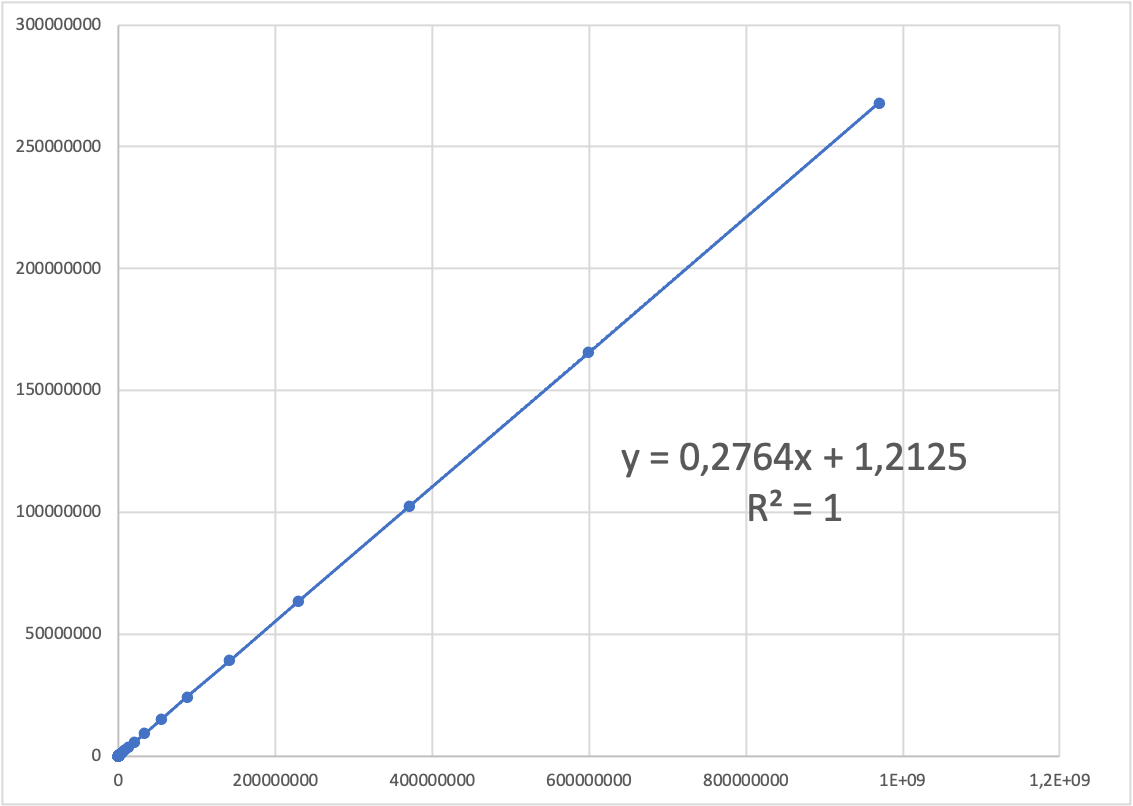}
\caption{Maximum order complexity for the sum of digits function in Zeckendorf base.}
\label{Zeckendorf_linear}
\end{figure}

\begin{conjecture}
We conjecture that $M(\mathcal{S}_Z,N) \sim \frac{1}{1+\varphi^2}N$. \Cref{Zeckendorf_linear} indicates that the maximum order complexity for $\mathcal{S}_Z$ is linear with coefficient $\frac{1}{1+\varphi^2}=0.27639\ldots$
\end{conjecture}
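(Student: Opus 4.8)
The plan is to combine Jansen's exact formula (\Cref{Method_Jansen}) with a precise analysis of where factors of $\mathcal{S}_Z$ repeat. By \Cref{Method_Jansen}, for the prefix of length $N$ one has $M(\mathcal{S}_Z,N)=k(N)+1$, where $k(N)$ is the length of the longest factor of $(s_Z(n)\bmod 2)_{0\le n<N}$ that occurs at least twice with two \emph{different} successors, both lying inside the prefix. Hence the conjecture is equivalent to $k(N)=(1+o(1))\,N/(1+\varphi^2)$, and I would establish the two matching inequalities separately.

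For the lower bound I would upgrade \Cref{Maximum_order_complexity_linear}. That lemma produces, via the non-interfering shift pair $(F_{\ell+1},F_{\ell+2})$, a repeated factor of length exactly $F_\ell$ whose later occurrence sits at position $q=F_{\ell+2}$, so that $q/F_\ell\to\varphi^2$ and $q+F_\ell=(1+\varphi^2+o(1))F_\ell$. Evaluated at $N\approx F_\ell+F_{\ell+2}$ this already yields the constant $1/(1+\varphi^2)$; the loss in \Cref{Main_theorem_linear} is caused purely by the discreteness of the Fibonacci lengths, since for $N$ strictly between $F_\ell+F_{\ell+2}$ and $F_{\ell+1}+F_{\ell+3}$ the factor length stagnates at $F_\ell$. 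I would remove this gap by realizing factors of \emph{every} length $L\in[F_\ell,F_{\ell+1})$: exploiting the self-similar structure of the fixed point of $f$ (equivalently, prescribing the low-order Zeckendorf digits of the two shifted arguments so that interference is postponed to position $L$ rather than to the next Fibonacci position), one should obtain for each target length a non-interfering pair whose later occurrence is still at $q=(\varphi^2+o(1))L$. This gives $k(N)\ge(1-o(1))\,N/(1+\varphi^2)$ for all $N$.

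The upper bound is the crux and the main obstacle. It amounts to the key estimate that whenever $s_Z(p+i)\equiv s_Z(q+i)\pmod 2$ for $0\le i<L$ with $p<q$ while $s_Z(p+L)\not\equiv s_Z(q+L)\pmod2$, the later occurrence satisfies $q\ge(\varphi^2-o(1))L$; combined with $q+L<N$ this forces $L\le(1+o(1))\,N/(1+\varphi^2)$. Proving it requires a complete classification of the right-special factors of $\mathcal{S}_Z$ together with the positions at which their two continuations occur. Since $f$ is primitive, $\mathcal{S}_Z$ is linearly recurrent, and I would argue by desubstitution: every sufficiently long right-special factor is an $f$-image of a shorter one, so the special factors form a hierarchy governed by the Fibonacci scaling $\varphi$, and a factor branches into two distinct successors precisely through the transversal Zeckendorf carry described in \Cref{Zeckendorf} and analysed in \Cref{two_Lucas_numbers}. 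Tracking this carry through the desubstitution should show that a length-$L$ branching can only arise from two arguments whose high-order Zeckendorf digits differ by a Fibonacci amount of size $\asymp\varphi^2 L$, which is exactly the key estimate.

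The delicate point — and the reason this remains a conjecture — is that the Zeckendorf carry propagates in both directions (\Cref{remark}), so the factor combinatorics is genuinely two-sided and naive bounds on the gap $q$ degrade. Making the desubstitution argument uniform in $N$, and ruling out exotic repetitions not of the non-interfering shift type (for instance even-weight shifts such as $L_\ell$, which a direct computation shows give a strictly larger ratio $q/L\approx\varphi^4+\varphi^2$), is where the real difficulty lies. If the classification can be pushed through to show that the extremal repetition is always of the $(F_{\ell+1},F_{\ell+2})$-type of \Cref{Maximum_order_complexity_linear}, the two bounds meet and the conjecture follows.
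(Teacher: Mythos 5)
The statement you are trying to prove is stated in the paper as a \emph{conjecture}: the authors offer no proof, only numerical evidence computed with Blumer's DAWG algorithm (Figure 1 and the accompanying table of step positions). So there is no proof in the paper to compare against, and your text, which you yourself present as a program rather than a proof, does not close the gap. The two halves of your plan are both incomplete. The upper bound --- that any factor of length $L$ occurring twice with different successors inside the prefix of length $N$ forces $L\le(1+o(1))N/(1+\varphi^2)$ --- is exactly the content of the conjecture; the desubstitution/right-special-factor classification you invoke is only gestured at, and the key claim that every branching of length $L$ comes from two positions differing by $\asymp\varphi^2L$ is asserted, not derived. Note also that \Cref{Method_Jansen} reduces the problem to controlling \emph{all} pairs of occurrences of \emph{all} factors, not only the structured non-interfering shifts of \Cref{Maximum_order_complexity_linear}, so ruling out ``exotic'' repetitions is not a side issue but the whole difficulty.

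More seriously, your proposed strengthening of the lower bound appears to be inconsistent with the paper's own data, and this affects how the conjecture can even be read. You reduce the conjecture to $k(N)=(1+o(1))N/(1+\varphi^2)$ \emph{for all} $N$ and plan to realize repeated factors of every intermediate length $L\in[F_\ell,F_{\ell+1})$ with second occurrence at $(\varphi^2+o(1))L$. But $M(\mathcal{S}_Z,\cdot)$ is a step function, and the paper's table for Figure 1 shows that its steps occur at $N=L_j+1$ with successive ratios tending to $\varphi$. If the steps stay geometrically spaced with ratio $\varphi$, then $M(\mathcal{S}_Z,N)/N$ oscillates asymptotically between $1/(1+\varphi^2)$ (just after a step) and $1/(\varphi+\varphi^3)$ (just before the next one) and does not converge; in that case the constant $1/(1+\varphi^2)$ can only be the $\limsup$, and the constant $1/(\varphi+\varphi^3)$ of \Cref{Main_theorem_linear} is already the correct $\liminf$. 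Your ``fill in all lengths $L$'' lower bound would force the step ratios to tend to $1$, contradicting the observed ratio $\varphi$. So before anything else you would need to decide which asymptotic statement is actually being conjectured; as written, your plan proves (if it could be completed) a statement that the numerics suggest is false.
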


Let us compare the maximum order complexity along polynomial subsequences of the Thue--Morse sequence to the Zeckendorf expansion sequence.

The plots for squares and cubes (and the large values of $R^2$), see Figure 2 and Figure 3, indicate that the growth is close to $x^{1/2}$ and $x^{1/3}$, respectively. We therefore formulate the following conjecture :
\begin{figure}[H]
\centering
\includegraphics[width=0.75\textwidth]{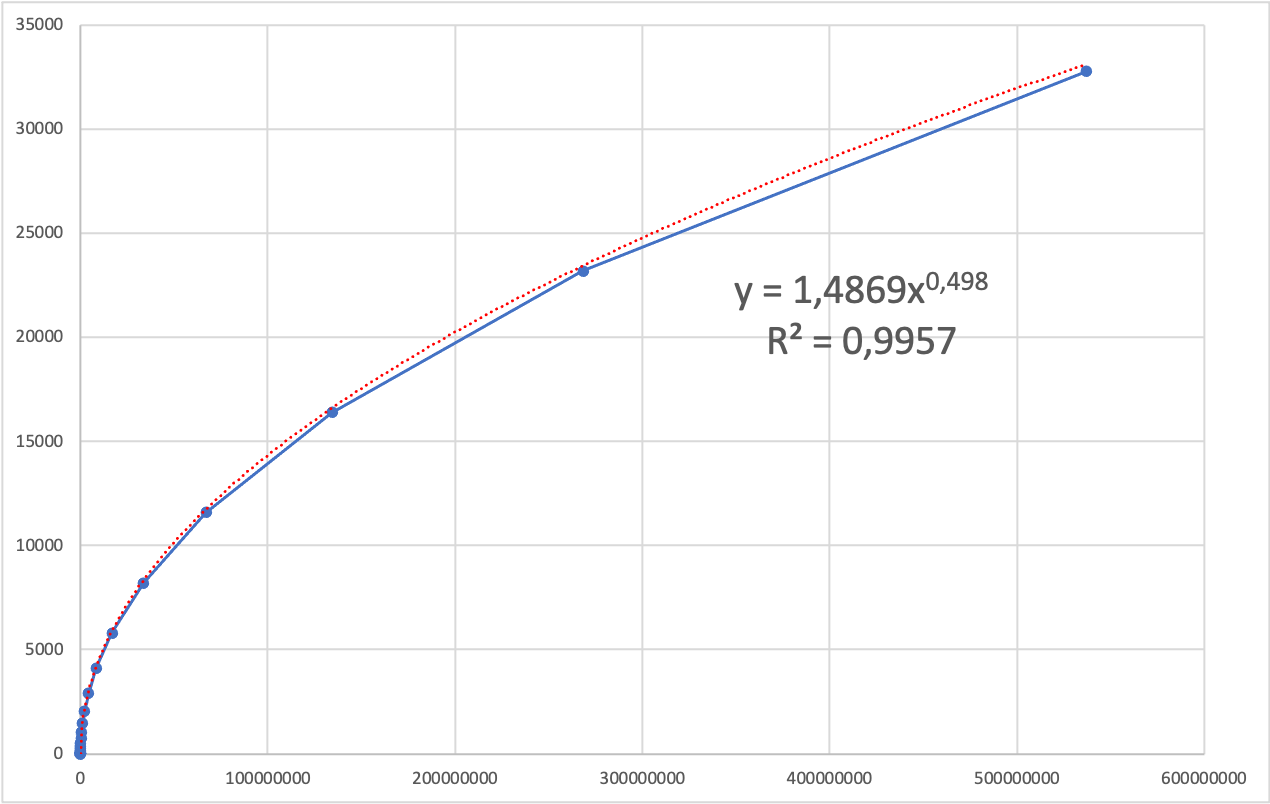}
\caption{Maximum order complexity for the Thue--Morse sequence along squares.}
\label{Thue-Morse_squares}
\end{figure}

\begin{figure}[H]
\centering
\includegraphics[width=0.75\textwidth]{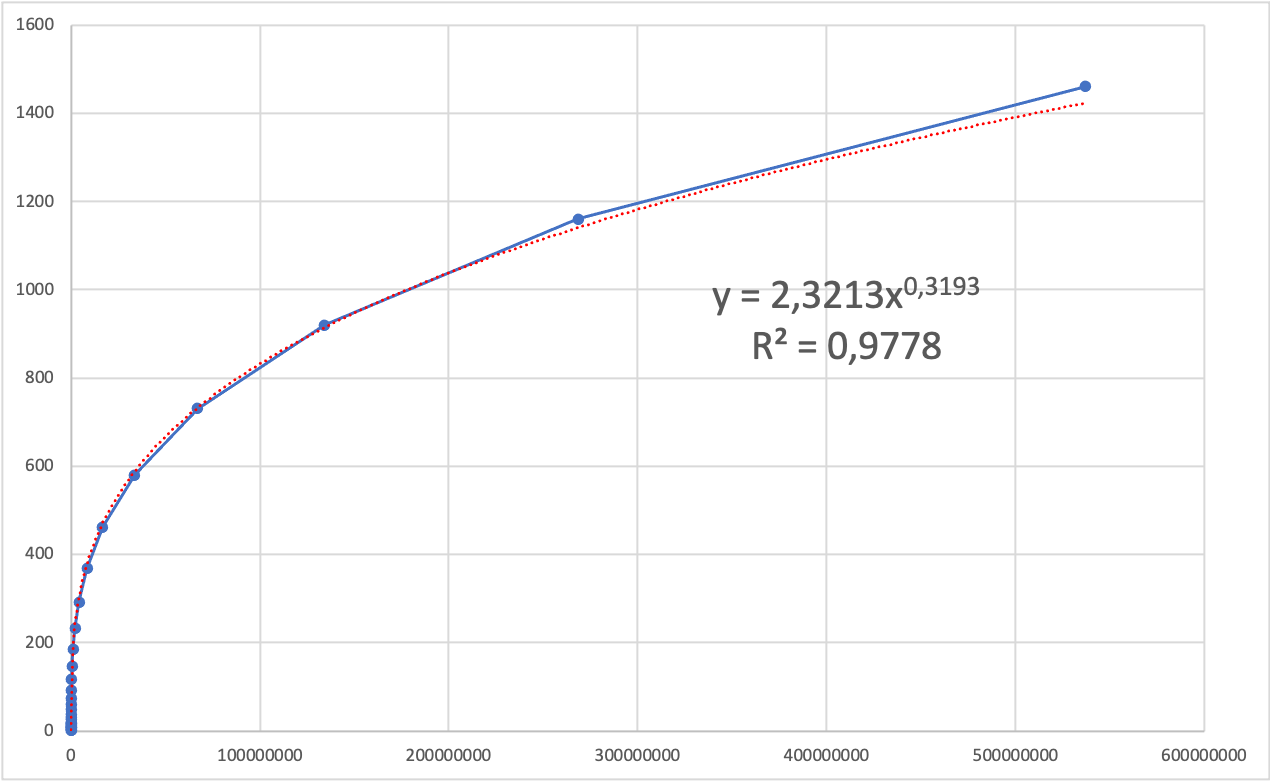}
\caption{Maximum order complexity for the Thue--Morse sequence along cubes.}
\label{Thue-Morse_cubes}
\end{figure}

\begin{conjecture} \label{conjecture_TM_Poly}
The Thue--Morse sequence along polynomial subsequences, denoted by $\mathcal{T}_P$ for a polynomial $P$ of degree $d$, verifies $M(\mathcal{T}_P,N)\asymp N^{1/d}$, i.e. there are $c,C>0$ such as for all $N$ large enough we have \begin{align*}
cN^{1/d}\leq M(\mathcal{T}_P,N) \leq CN^{1/d}.
\end{align*}
\end{conjecture}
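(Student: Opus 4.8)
The plan is to treat the two inequalities $M(\mathcal{T}_P,N)\gg N^{1/d}$ and $M(\mathcal{T}_P,N)\ll N^{1/d}$ separately, the lower bound being accessible by the techniques developed above and the upper bound being the genuine difficulty. For the lower bound I would transplant the non-interfering-block construction of \Cref{key_monomial} and \Cref{key_polynomial} from the Zeckendorf setting into base $2$. The decisive simplification is that $2^{\ell}$ is a genuine one-sided shift, so that $s_2(a+2^{\ell}b)=s_2(a)+s_2(b)$ whenever $a<2^{\ell}$ (this is identity \eqref{q-base} of \Cref{remark}); there is no transversal carry and hence no two-sided spreading of digit blocks. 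Expanding $P(n+2^{\ell})=\sum_{0\le\lambda\le d}\beta_{\lambda}(n)2^{\lambda\ell}$ exactly as in \Cref{decomposition2}, the $d+1$ blocks sit in windows around $\lambda\ell$ and, for $\ell$ large compared with $\log n$, are pairwise non-interfering.

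First I would restrict $n$ to a range $n<2^{\mu\ell}$ for a suitable $\mu$ and verify, via the base-$2$ analogue of \Cref{interference2}, that every $\beta_{\lambda}(n)$ is short enough for the windows to stay disjoint; then $s_2(P(n+2^{\ell}))=\sum_{\lambda}s_2(\beta_{\lambda}(n))$ is independent of $\ell$, so two shifts $\ell$ and $\ell'$ produce a common factor of $\mathcal{T}_P$ of length $\gg 2^{\mu\ell}\gg N^{1/d}$. Next I would manufacture \emph{different} successors at the end of this factor by the device used for \eqref{diff}: choosing the shift so that exactly one subdominant carry differs between the two continuations, which changes $s_2$ by one and flips the Thue--Morse value. \Cref{Method_Jansen} then converts such a repeated factor with distinct successors into $M(\mathcal{T}_P,N)\gg N^{1/d}$. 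Because the shift is one-sided, the window lengths scale like $N^{1/d}$ rather than $N^{1/(2d)}$, which is exactly the gain over the Zeckendorf case noted after \Cref{Lucas}; this direction is essentially the base-$2$ analogue of \Cref{Main_theorem_polynomial} and was obtained for polynomial subsequences of $\mathcal{T}$ in \cite{popoli2020,sunwinterhof2019bis}.

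The hard part is the upper bound $M(\mathcal{T}_P,N)\ll N^{1/d}$. By \Cref{Method_Jansen} it is equivalent to showing that the longest factor of $(t(P(n)))_{0\le n<N}$ that occurs twice with different successors has length $O(N^{1/d})$; equivalently, if $t(P(n+j))=t(P(m+j))$ for $0\le j<L$ with $n\neq m$ and $L\gg N^{1/d}$, then necessarily $t(P(n+L))=t(P(m+L))$. I would try to extract from such a long agreement rigid information about the binary carry patterns of $P(n+j)$ and $P(m+j)$ and show that they are forced to stay synchronised one further step. The obstacle is precisely that $\mathcal{T}_P$ is not automatic (see \cite[Theorem 6.10.1]{alloucheshallit2003}) and has exponential subword complexity by Moshe's theorem \cite{moshe2007}, so there is no finite automaton and no bounded memory controlling the digits of $P(n)$ as $n$ varies; understanding which long digit patterns of two polynomial orbits can coincide over a whole window of length $L$ seems to require effective control of the joint carry dynamics of $P(n)$ and $P(m)$, for which no handle is currently available. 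This is why I expect the upper bound to remain out of reach, and at present only the numerical evidence in \Cref{Thue-Morse_squares} and \Cref{Thue-Morse_cubes} supports it.
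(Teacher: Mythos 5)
The statement you were asked to prove is \Cref{conjecture_TM_Poly}, which the paper states as a \emph{conjecture}: it offers no proof, only the numerical evidence of \Cref{Thue-Morse_squares} and \Cref{Thue-Morse_cubes} produced by Blumer's DAWG algorithm, together with the remark that a proof would show the lower bound of \cite{popoli2020} to be optimal. So there is no ``paper's own proof'' to compare against, and your proposal cannot be judged as a complete argument for the two-sided estimate --- indeed it is not one, and you say so explicitly.

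That said, your assessment of the situation is accurate and matches the paper's. The lower bound $M(\mathcal{T}_P,N)\gg N^{1/d}$ is the known half: it is exactly the content of \cite{popoli2020} (and \cite{sunwinterhof2019bis} for squares), and your sketch --- non-interfering digit blocks around the powers $2^{\lambda\ell}$ in the expansion of $P(n+2^{\ell})$, equality of $s_2$ over a long window via identity \eqref{q-base}, then a single engineered carry to flip the successor, concluded by \Cref{Method_Jansen} --- is the correct mechanism, and your observation that the one-sided shift in base $2$ yields $N^{1/d}$ where the transversal Zeckendorf carry only yields $N^{1/(2d)}$ is precisely the point the paper makes after \Cref{Lucas}. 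The genuine gap is the upper bound $M(\mathcal{T}_P,N)\ll N^{1/d}$: you correctly reduce it via \Cref{Method_Jansen} to bounding the length of repeated factors with distinct successors, and correctly identify why no current technique closes it (non-automaticity of $\mathcal{T}_P$ and Moshe's exponential subword complexity rule out any finite-memory argument). Since this is exactly where the paper also stops, your proposal should be read as a correct account of what is provable and what remains open, not as a proof of the conjecture.
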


A proof of this conjecture would imply that the lower bound proved by Popoli \cite{popoli2020} is optimal.
\begin{minipage}[b]{0.45\linewidth}\centering

\end{minipage}
\begin{figure}[H]
\centering
\includegraphics[width=0.75\textwidth]{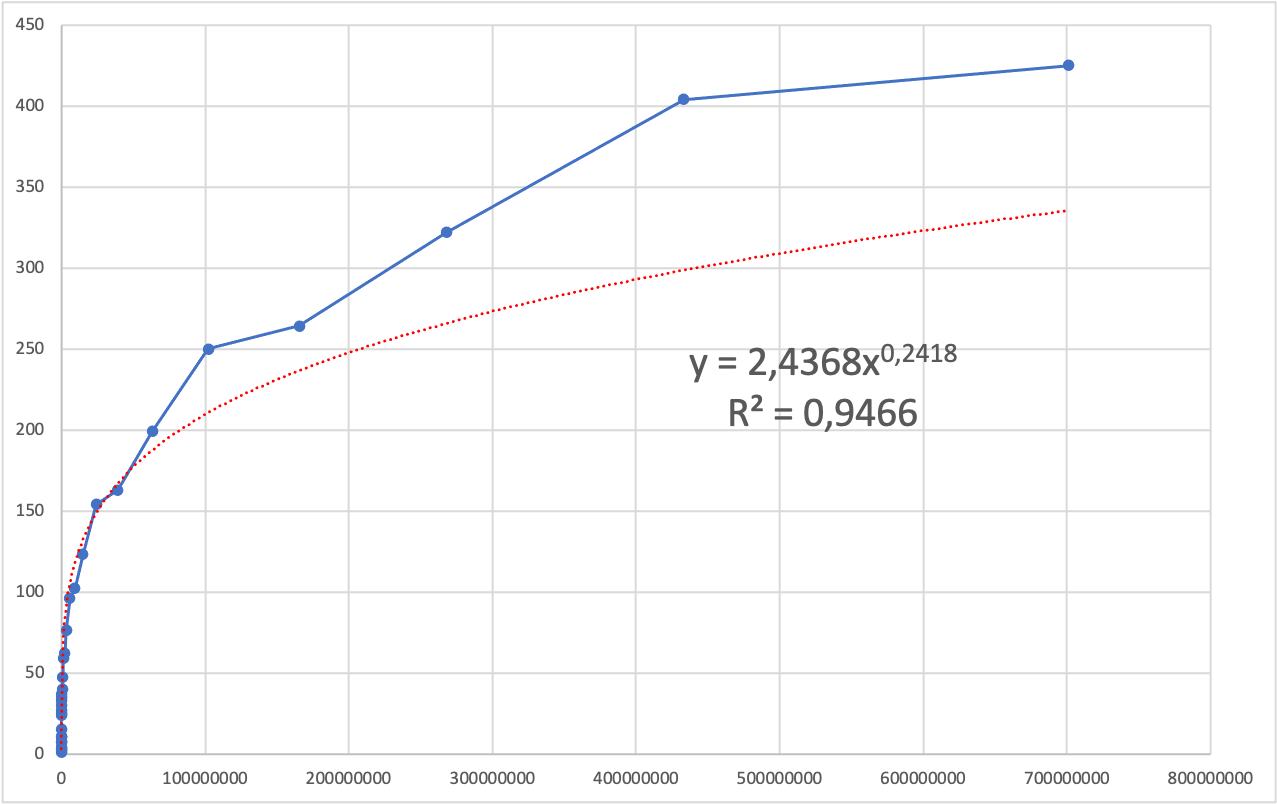}
\caption{Maximum order complexity of $\mathcal{S}_Z$  along squares.}
\label{Zeckendorf_squares}
\end{figure}

The maximum order complexity of $\mathcal{S}_\varphi$ is algorithmically more difficult to handle. Motivated by the results on Thue--Morse we conjecture the following:
\begin{conjecture} \label{Conjecture_Zeckendorf_Poly}
The sequence $\mathcal{S}_Z$ along polynomial subsequences, denoted by $\mathcal{S}_{Z,P}$ for a polynomial $P$ of degree $d\geq 2$, verifies $M(\mathcal{S}_{Z,P},N)\asymp N^{1/(2d)}$, i.e. there are $c,C>0$ such as for all $N$ large enough we have \begin{align*}
cN^{1/(2d)}\leq M(\mathcal{S}_{Z,P},N) \leq CN^{1/(2d)}.
\end{align*}
\end{conjecture}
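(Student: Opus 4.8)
The lower bound $M(\mathcal{S}_{Z,P},N)\gg N^{1/(2d)}$ is already furnished by \Cref{Main_theorem_polynomial}, so the whole difficulty lies in the matching upper bound $M(\mathcal{S}_{Z,P},N)\le CN^{1/(2d)}$. The plan is to attack it through Jansen's reformulation (\Cref{Method_Jansen}): writing $g(n)=s_Z(P(n))\bmod 2$, one has $M(\mathcal{S}_{Z,P},N)=k+1$, where $k$ is the length of the longest factor of $(g(n))_{0\le n<N}$ that occurs twice with two distinct successors. Hence it suffices to show that for $L>CN^{1/(2d)}$ no factor of length $L$ can occur at two positions with different continuations; equivalently, that the parity of $s_Z(P(\cdot))$ read over a window of length $L$ already determines the next value.

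First I would fix two starting positions $a<b\le N$, assume that the corresponding windows agree, and set $c=b-a$, so that
\begin{equation*}
s_Z(P(m))\equiv s_Z(P(m+c))\pmod 2,\qquad m\in[a,a+L).
\end{equation*}
Since $P$ is monic of degree $d$, for every $m\le N$ the Zeckendorf length of $P(m)$ is only $O(\log N)=o\bigl(N^{1/(2d)}\bigr)$, so a window of length $L\gg N^{1/(2d)}$ sweeps across the full expansions of $P(m)$ for very many consecutive arguments. The aim is to convert this long string of parity coincidences into a rigidity statement. I would study the discrete increment $P(m+1)-P(m)=Q(m)$, a polynomial of degree $d-1$ with leading coefficient $d$, and track how adding $Q(m)$ perturbs the Zeckendorf digits of $P(m)$; combined with the two-sided carry rule $2F_j=F_{j+1}+F_{j-2}$ of \Cref{Zeckendorf} and the Lucas expansions of \Cref{Lucas}, this should express each difference $s_Z(P(m+c))-s_Z(P(m))$ through a bounded number of local carry cascades whose parities are governed by configurations of the type catalogued in \Cref{two_Lucas_numbers}. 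Forcing these parities to match for $L$ consecutive values of $m$ ought to pin down the relevant digit blocks of $P$ near $a$ and near $b$, and hence force $s_Z(P(a+L))\equiv s_Z(P(b+L))$, which is exactly the required successor equality. The threshold $N^{1/(2d)}$ should re-emerge here just as in \Cref{Main_theorem_polynomial}: the factor $2d$ balances the degree-$d$ growth of $P$ against the \emph{two-sided} (``transversal'') width of Zeckendorf carries, the extra factor $2$ being precisely this two-sidedness, which is absent in the $q$-ary setting.

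The hardest part will be the quantitative carry control needed to make the rigidity step rigorous. For the plain digit sum, equidistribution and normality of $s_Z(P(n))\bmod 2$ can be approached by van der Corput differencing and exponential-sum estimates in the spirit of Drmota--Mauduit--Rivat, but these deliver statistical regularity rather than the combinatorial assertion that a window of length $>CN^{1/(2d)}$ determines its successor. One would need a uniform bound showing that the set of $m\le N$ for which adding $Q(m)$ triggers an anomalously long carry cascade is negligible, together with an ``independence of distant digits'' estimate ensuring that the parities observed along the window cannot keep agreeing past length $CN^{1/(2d)}$ unless $a$ and $b$ already coincide in the appropriate local sense. I expect this uniform carry-propagation analysis in Zeckendorf base for polynomial arguments to be the genuine obstacle; the same obstruction blocks the analogous \Cref{conjecture_TM_Poly} for the Thue--Morse sequence, so a proof along these lines would plausibly settle that case too.
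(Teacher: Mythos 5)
This statement is a \emph{conjecture} in the paper, not a theorem: the authors prove only the lower bound $M(\mathcal{S}_{Z,P},N)\gg N^{1/(2d)}$ (that is \Cref{Main_theorem_polynomial}) and support the matching upper bound purely by numerical experiments with Blumer's DAWG algorithm (see \Cref{Zeckendorf_squares}); they explicitly state that a proof of the conjecture would show their lower bound is sharp. Your proposal correctly identifies this state of affairs and correctly attributes the lower bound to \Cref{Main_theorem_polynomial}, so that half is fine.

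For the upper bound, however, what you have written is a research plan, not a proof, and the gap is exactly where you yourself locate it. The reduction via \Cref{Method_Jansen} is sound: to get $M(\mathcal{S}_{Z,P},N)\le CN^{1/(2d)}$ one must show that no factor of length $L>CN^{1/(2d)}$ of $(s_Z(P(n))\bmod 2)_{n<N}$ occurs at two positions with different successors. But the central ``rigidity statement'' --- that $L$ consecutive parity coincidences $s_Z(P(m))\equiv s_Z(P(m+c))\pmod 2$ force the successor parities to agree --- is asserted, not established, and none of the tools you invoke can currently deliver it. The identities of \Cref{Lucas} and the case analysis of \Cref{two_Lucas_numbers} are used in the paper only to \emph{construct} one specific pair of long agreeing windows with distinct successors (a lower-bound device); they say nothing about \emph{all} pairs of positions, which is what an upper bound requires. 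Likewise, van der Corput differencing and Drmota--Mauduit--Rivat-type exponential sums yield statistical equidistribution, whereas the upper bound is a worst-case combinatorial statement about every repeated factor; there is no known bridge between the two, which is precisely why the analogous \Cref{conjecture_TM_Poly} for Thue--Morse is also open. So the proposal contains no proof of the conjecture, only a correct identification of the known lower bound plus a heuristic sketch whose key step (uniform control of carry cascades for polynomial arguments in Zeckendorf base) remains the open problem.
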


This is, to some extent, supported by the plot for squares, see \Cref{Zeckendorf_squares}. Again, a proof of this conjecture would imply that the lower bound in \Cref{Main_theorem_polynomial} is sharp. Notice that there is a a larger gap for the maximum order complexity between the linear case and the quadratic case for $\mathcal{S}_Z$ compared to the classical Thue--Morse sequence. 

\medskip

We conclude the discussion by a surprising phenomenon on \emph{steps}. Since the maximum order complexity is integer-valued and an increasing function, it is a step function. Each time $M(\mathcal{S},N)$ has a different value from $M(\mathcal{S},N-1)$, we say that $N$ is \emph{a step}. The ratio of successive steps is the ratio $N_1/N_2$ for two steps $N_1,N_2$ such that there is no $N\geq 1$ with $M(\mathcal{S},N_1)<M(\mathcal{S},N)<M(\mathcal{S},N_2)$.

We observed that the ratio of successive steps seems to converge.

\begin{conjecture}\label{Ratio_conjecture}
The ratio of successive steps for the maximum order complexity of a sequence related to the Thue--Morse sequence, respectively, the sequence related to the Zeckendorf sum of digits, tends to 2, respectively, the golden ration $\varphi$.
\end{conjecture}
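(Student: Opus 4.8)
The plan is to recast the statement in terms of \emph{special factors} via \Cref{Method_Jansen} and then to exploit the self-similarity of the two sequences. By Jansen's lemma we have $M(\mathcal{S},N)=L(N)+1$, where $L(N)$ is the length of the longest factor of $s_0\cdots s_{N-1}$ occurring at least twice with two distinct successors; hence $N$ is a step precisely when appending $s_{N-1}$ creates such a repeated factor strictly longer than any present in $s_0\cdots s_{N-2}$. The conjecture is therefore equivalent to the assertion that the positions at which $L(\cdot)$ strictly increases grow geometrically, with common ratio $2$ for the Thue--Morse sequence and $\varphi$ for $\mathcal{S}_Z$.

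First I would set up a self-similar bookkeeping of these special factors. Both sequences are fixed points of a prolongable morphism whose incidence matrix has a Perron eigenvalue equal to the conjectured ratio: this eigenvalue is $2$ for the $2$-uniform Thue--Morse morphism, while for $f:a\mapsto ab,\ b\mapsto c,\ c\mapsto cd,\ d\mapsto a$ the characteristic polynomial of the incidence matrix equals $x^2(1-x)^2-1=(x^2-x-1)(x^2-x+1)$, whose dominant real root is $\varphi$. The structural heart of the argument is to show that the morphism sends special factors to special factors and, conversely, that every sufficiently long special factor is, up to a bounded boundary correction, the image of a shorter one. This would organise the special factors into \emph{generations}, the length and first-occurrence position of a generation-$(\ell+1)$ factor being $\varphi$ (resp.\ $2$) times those of its generation-$\ell$ preimage, up to lower-order terms.

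Next I would pin down the first-occurrence positions, reusing the explicit constructions from the proofs of \Cref{Main_theorem_linear} and \Cref{Main_theorem_polynomial}. In the linear case the two coinciding blocks of length $F_\ell$ with different successors arise from the non-interfering shifts $F_{\ell+1}$ and $F_{\ell+2}$, so a new special factor first appears near $N=F_\ell+F_{\ell+2}$, and the ratio of consecutive such positions is $(F_{\ell+1}+F_{\ell+3})/(F_\ell+F_{\ell+2})\to\varphi$; for the Thue--Morse sequence the analogous dyadic construction places the corresponding occurrences at positions that double. This already exhibits genuine steps at the predicted geometric positions, which is exactly what the lower bounds of the two theorems guarantee.

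The hard part will be the complementary statement that there are \emph{no} intermediate steps, i.e.\ that $L(\cdot)$ stays constant between two consecutive generation positions. This is precisely a tight upper bound on the maximum order complexity---sharp enough to force the long flat plateaus without which the ratio of successive steps would tend to $1$ rather than to the eigenvalue---and it requires a complete classification of the special factors, not merely the construction of some of them. For the Thue--Morse sequence one may hope to extract this from the well-understood bispecial-factor structure of its subshift; for $\mathcal{S}_Z$ the non-uniformity of $f$, combined with the transversal carry propagation of \Cref{Zeckendorf}, makes the enumeration of special factors considerably more delicate, and controlling it is the principal obstacle to turning the present lower bounds into the exact step asymptotics claimed by the conjecture.
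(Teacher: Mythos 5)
This statement is \Cref{Ratio_conjecture}, which the paper does not prove: it is supported only by the DAWG computations reported in the tables, so there is no ``paper proof'' to compare against, and your proposal must be judged as a standalone attempt. As such it is a programme rather than a proof, and you say so yourself: the decisive ingredient --- a matching \emph{upper} bound on $M(\mathcal{S},N)$, equivalently a complete classification of the repeated factors with two successors showing that no new one appears between consecutive generation positions --- is left entirely open. Without it, nothing excludes intermediate steps, and the ratio of successive steps could degenerate to $1$. What you do have is correct and worth recording: the reduction of ``steps'' to first occurrences of longer special factors via \Cref{Method_Jansen}, the Perron eigenvalue $2$ for Thue--Morse, the factorisation $x^2(x-1)^2-1=(x^2-x-1)(x^2-x+1)$ of the characteristic polynomial of the incidence matrix of $f$ with dominant root $\varphi$, and the observation that the construction in the proof of \Cref{Main_theorem_linear} produces genuine steps at $N=F_\ell+F_{\ell+2}+1=L_{\ell+1}+1$, which matches the first table exactly and gives ratio $\to\varphi$. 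But this only re-derives the lower-bound half already contained in the theorems.

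A second gap is one of scope. The conjecture covers not only $\mathcal{T}$ and $\mathcal{S}_Z$ but also the sequences ``related to'' them, i.e.\ the polynomial subsequences of Figures 2--5. Your structural mechanism --- the morphism mapping special factors to special factors, organising them into generations scaled by the Perron eigenvalue --- is only available for the fixed points themselves; $\mathcal{T}_P$ and $\mathcal{S}_{Z,P}$ are not automatic (resp.\ not morphic in any evident way), as the paper itself emphasizes, so the self-similarity argument does not transfer to them. Indeed the data for $\mathcal{S}_Z$ along cubes (\Cref{Zeckendorf_cubes} and the last table) does not yet exhibit a clean geometric pattern, which is precisely why the authors flag this case as unresolved even numerically. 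Any honest attack on the full conjecture would need a mechanism for step positions that does not pass through morphic self-similarity, e.g.\ a quantitative version of the non-interference constructions of \Cref{Main_theorem_polynomial} combined with an upper bound, and no such upper bound is currently known even for the unrarefied sequences.
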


This phenomenon seems to be related to numeration systems, and not directly to the fact that a sequence is automatic or morphic.

\begin{table}[!htp]
\begin{minipage}[b]{0.45\linewidth}\centering
\begin{tabular}{ll}
\noalign{\smallskip}
$N$& Ratio of steps in in Fig. 1. \\ \hline
2	 & - \\
5	& 2,5\\
12 & 2,4 \\
19 & 1,583333333 \\
30 & 1,578947368 \\
48 & 1,6 \\
77 & 1,604166667 \\
124 &	1,61038961 \\
200	& 1,612903226 \\
323	& 1,615 \\
522	& 1,616099071 \\
844	& 1,616858238 \\
1365 & 1,617298578 \\
2208 & 1,617582418 \\
3572 & 1,617753623 \\
5779 & 1,617861142 \\
9350 & 1,617926977 \\
15128 & 1,617967914 \\
24477	& 1,617993125 \\
39604 & 1,618008743 \\
64080 &	 1,618018382 \\
103683 & 1,618024345 \\
167762 & 1,618028028 \\
271444 & 1,618030305 \\
439205 & 1,618031712 \\
710648	& 1,618032582 \\
\noalign{\smallskip}\hline
\end{tabular}\label{table1}

\end{minipage}
\hspace{0.5cm}
\begin{minipage}[b]{0.45\linewidth}
\centering
\begin{tabular}{ll}
\noalign{\smallskip}
$N$& Ratio of steps in Fig. 2.\\ \hline
2 & - \\
4 & 2 \\
10 & 2,5 \\
23 & 2,3 \\
37 & 1,608695652 \\
52 & 1,405405405\\
73 & 1,403846154 \\
138 & 1,890410959 \\
270 & 1,956521739 \\
530 & 1,962962963 \\
1048 & 1,977358491 \\
2082 & 1,986641221 \\
4146 & 1,991354467 \\ 
8258 & 1,991799325 \\ 
16478 & 1,995398402 \\
32898 & 1,996480155 \\
65720 & 1,997689829 \\
131330 & 1,998326233 \\
262510 & 1,998857839 \\
524802 & 1,999169555 \\
1049302 & 1,999424545 \\
2098178 & 1,999594016 \\
4195754 & 1,999713084 \\
8390658 & 1,999797414 \\
\noalign{\smallskip}\hline
\end{tabular}\label{table2}

\end{minipage}
\end{table}

\begin{figure}[H]
\centering
\includegraphics[width=0.75\textwidth]{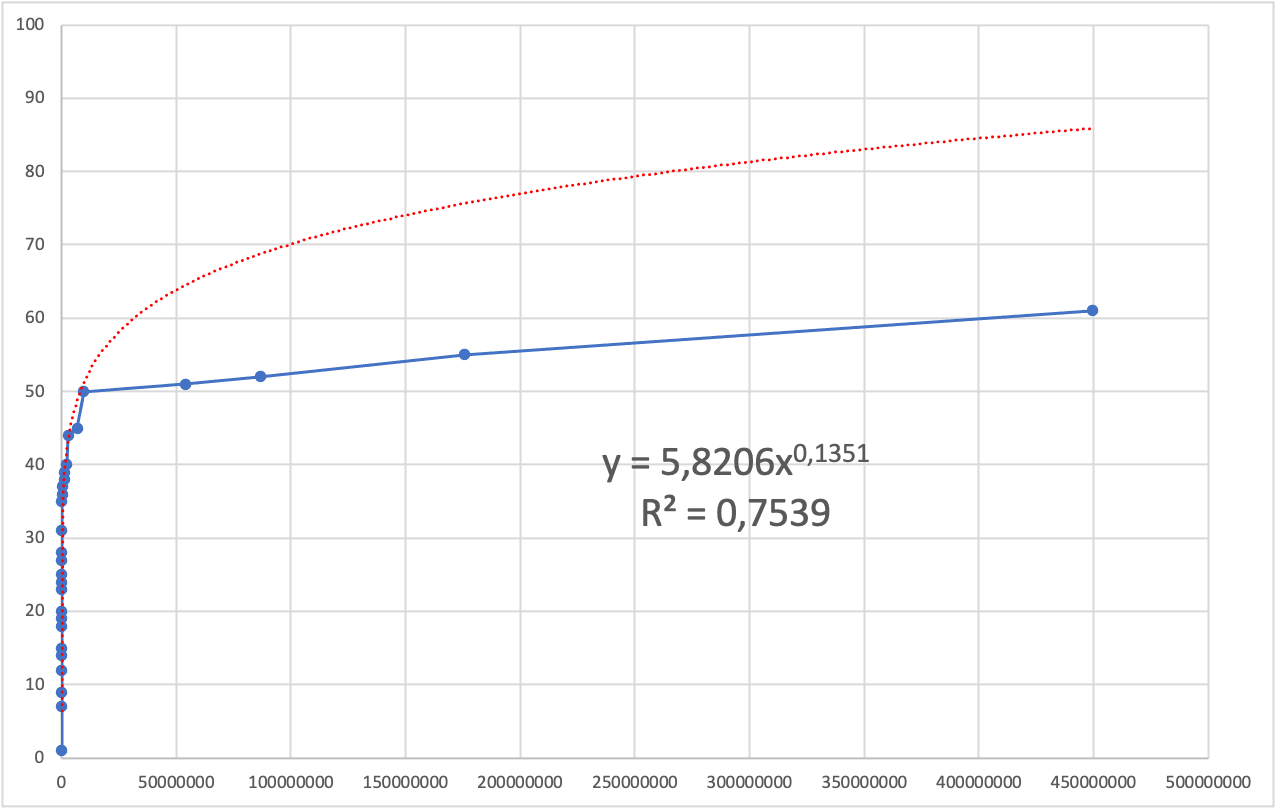}
\caption{Maximum order complexity of $\mathcal{S}_Z$  along cubes.}
\label{Zeckendorf_cubes}
\end{figure}

\begin{center}

\begin{minipage}[b]{0.45\linewidth}\centering
\begin{tabular}{ll}
\noalign{\smallskip}
$N$& Ratio of steps in Fig. 4. \\ \hline
2	& - \\
4&	2\\
6	&1,5\\
8	&1,33333333\\
18	&2,25\\
24	&1,33333333\\
42	&1,75\\
59	&1,40476191\\
171	&2,89830509\\
448	&2,61988304\\
18367&	40,9977679\\
22118&	1,20422497\\
28371&	1,28271091\\
83517	&2,94374538\\
167773&	2,00884850\\
317825&	1,89437514\\
439221&	1,38195863\\
832054&	1,89438574\\
1346308&	1,61805364\\
2178338	&1,61800866\\
3524634&	1,61803816\\
5702922&	1,61801821\\
9227522&	1,61803405\\
14930387&	1,61802779\\
24157917	&1,61803689\\
39088244	&1,61803040\\
63246131	&1,61803460\\
102334245&	1,61803170\\
165580287&	1,61803399\\
267914386&	1,61803311\\
433494698&	1,61803442\\
701408926&	1,61803346\\
\noalign{\smallskip}\hline
\end{tabular}\label{table3}
\end{minipage}

\end{center}

\begin{center}

\begin{minipage}[b]{0.45\linewidth}
\centering
\begin{tabular}{ll}
\noalign{\smallskip}
$N$& Ratio of steps in Fig. 5.\\ \hline
2	& - \\
9 &	4,5\\
37&	4,11111111\\
59	&1,59459460\\
67	&1,13559322\\
167	&2,49253731\\
273	&1,63473054\\
1677&	6,14285714\\
1960&	1,16875373\\
2178&	1,11122449\\
6378&	2,92837466\\
11095&	1,73957353\\
17046	&1,53636773\\
20398&	1,19664437\\
40879	&2,00406903\\
57958	&1,41779398\\
510998&	8,81669485\\
567415&	1,11040552\\
1356065	& 2,38989981\\
1390403	& 1,02532180\\
2264908	& 1,62895794\\
2860200 & 1,26283275\\
6783276 & 2,37160898\\
9626340 & 1,41912846\\
54170633	& 5,62733427\\
86568697	& 1,59807431\\
175721520	& 2,02985058\\
449510050 & 2,55808196\\
\noalign{\smallskip}\hline

\end{tabular}\label{table4}
\end{minipage}

\end{center}

The plot for cubes, see \Cref{Zeckendorf_cubes}, does not enlighten \textit{Conjecture 3.3} and \textit{Conjecture 3.4}. With our program and our machine, it is not possible to compute the maximum order complexity of a sequence any further than $10^9$ terms. Nevertheless, we believe that if it were possible to compute for a few more terms, both of theses conjectures should appear more clearly.

To perform all these plots, we used the cluster \emph{yeti} that consists of 4 nodes, 4 x Intel Xeon Gold 6130	CPU and 16 cores / CPU, with 768 GiB of memory, see \url{https://www.grid5000.fr/w/Grenoble:Hardware#yeti}.

~\newline

\textbf{Acknowledgements:} We would like to thank Arne Winterhof for very helpful discussions and comments on a preliminary version of the paper, and for sending us the bachelor thesis of his student Jan-Michael Holzinger. This work was supported partly by the French PIA project ``Lorraine Universit\'{e} d'Excellence'', reference ANR-15-IDEX-04-LUE, and by the projects ANR-18-CE40-0018 (EST) and ANR-20-CE91-0006 (ArithRand).

\bibliographystyle{spmpsci}
\bibliography{biblio} 

\begin{thebibliography}{10}
\providecommand{\url}[1]{{#1}}
\providecommand{\urlprefix}{URL }
\expandafter\ifx\csname urlstyle\endcsname\relax
  \providecommand{\doi}[1]{DOI~\discretionary{}{}{}#1}\else
  \providecommand{\doi}{DOI~\discretionary{}{}{}\begingroup
  \urlstyle{rm}\Url}\fi

\bibitem{AHN2020}
Allouche, J.P., Han, G.N., Niederreiter, H.: Perfect linear complexity profile
  and apwenian sequences.
\newblock Finite Fields Appl. \textbf{68}, 101761, 13 (2020).
\newblock \doi{10.1016/j.ffa.2020.101761}.
\newblock \urlprefix\url{https://doi.org/10.1016/j.ffa.2020.101761}

\bibitem{alloucheshallit2003}
Allouche, J.P., Shallit, J.: Automatic sequences.
\newblock Cambridge University Press, Cambridge (2003).
\newblock \doi{10.1017/CBO9780511546563}.
\newblock \urlprefix\url{https://doi.org/10.1017/CBO9780511546563}.
\newblock Theory, applications, generalizations

\bibitem{blumerblumerhausslerehrenfeuchtchenseiferas1985}
Blumer, A., Blumer, J., Haussler, D., Ehrenfeucht, A., Chen, M.T., Seiferas,
  J.: The smallest automaton recognizing the subwords of a text.
\newblock pp. 31--55 (1985).
\newblock \doi{10.1016/0304-3975(85)90157-4}.
\newblock \urlprefix\url{https://doi.org/10.1016/0304-3975(85)90157-4}.
\newblock Special issue: Eleventh international colloquium on automata,
  languages and programming (Antwerp, 1984)

\bibitem{bruyere1995}
Bruy\`ere, V.: Automata and numeration systems.
\newblock S\'{e}m. Lothar. Combin. \textbf{35}, Art. B35b, approx. 19 (1995)

\bibitem{bugeaudmignottesiksek2006}
Bugeaud, Y., Mignotte, M., Siksek, S.: Classical and modular approaches to
  exponential {D}iophantine equations. {I}. {F}ibonacci and {L}ucas perfect
  powers.
\newblock Ann. of Math. (2) \textbf{163}(3), 969--1018 (2006).
\newblock \doi{10.4007/annals.2006.163.969}.
\newblock \urlprefix\url{https://doi.org/10.4007/annals.2006.163.969}

\bibitem{christol1979}
Christol, G.: Ensembles presque periodiques {$k$}-reconnaissables.
\newblock Theoret. Comput. Sci. \textbf{9}(1), 141--145 (1979).
\newblock \doi{10.1016/0304-3975(79)90011-2}.
\newblock \urlprefix\url{https://doi.org/10.1016/0304-3975(79)90011-2}

\bibitem{diem2012}
Diem, C.: On the use of expansion series for stream ciphers.
\newblock LMS J. Comput. Math. \textbf{15}, 326--340 (2012).
\newblock \doi{10.1112/S146115701200109X}.
\newblock \urlprefix\url{https://doi.org/10.1112/S146115701200109X}

\bibitem{drmotamauduitrivat2019}
Drmota, M., Mauduit, C., Rivat, J.: Normality along squares.
\newblock J. Eur. Math. Soc. (JEMS) \textbf{21}(2), 507--548 (2019).
\newblock \doi{10.4171/JEMS/843}.
\newblock \urlprefix\url{https://doi.org/10.4171/JEMS/843}

\bibitem{drmotamullnerspiegelhofer2018}
Drmota, M., M\"{u}llner, C., Spiegelhofer, L.: M\"{o}bius orthogonality for the
  {Z}eckendorf sum-of-digits function.
\newblock Proc. Amer. Math. Soc. \textbf{146}(9), 3679--3691 (2018).
\newblock \doi{10.1090/proc/14015}.
\newblock \urlprefix\url{https://doi.org/10.1090/proc/14015}

\bibitem{golomb1967}
Golomb, S.W.: Shift register sequences.
\newblock With portions co-authored by Lloyd R. Welch, Richard M. Goldstein,
  and Alfred W. Hales. Holden-Day, Inc., San Francisco,
  Calif.-Cambridge-Amsterdam (1967)

\bibitem{isikwinterhof2017}
Işık, L., Winterhof, A.: Maximum-order complexity and correlation measures.
\newblock Cryptography \textbf{1} (2017).
\newblock \doi{10.3390/cryptography1010007}

\bibitem{jansen1989}
Jansen, C.J.A.: Investigations on nonlinear streamcipher systems:
  {C}onstruction and evaluation methods.
\newblock ProQuest LLC, Ann Arbor, MI (1989).
\newblock Thesis (Dr.)--Technische Universiteit Delft (The Netherlands)

\bibitem{jansen1991}
Jansen, C.J.A.: The maximum order complexity of sequence ensembles.
\newblock In: D.W. Davies (ed.) Advances in Cryptology --- EUROCRYPT '91, pp.
  153--159. Springer Berlin Heidelberg, Berlin, Heidelberg (1991)

\bibitem{mauduit2001}
Mauduit, C.: Multiplicative properties of the {T}hue-{M}orse sequence.
\newblock Period. Math. Hungar. \textbf{43}(1-2), 137--153 (2001).
\newblock \doi{10.1023/A:1015241900975}.
\newblock \urlprefix\url{https://doi.org/10.1023/A:1015241900975}

\bibitem{mauduitsarkozy1998}
Mauduit, C., S\'{a}rk\"{o}zy, A.: On finite pseudorandom binary sequences.
  {II}. {T}he {C}hampernowne, {R}udin-{S}hapiro, and {T}hue-{M}orse sequences,
  a further construction.
\newblock J. Number Theory \textbf{73}(2), 256--276 (1998).
\newblock \doi{10.1006/jnth.1998.2286}.
\newblock \urlprefix\url{https://doi.org/10.1006/jnth.1998.2286}

\bibitem{merainiederreiterwinterhof2017}
M\'{e}rai, L., Niederreiter, H., Winterhof, A.: Expansion complexity and linear
  complexity of sequences over finite fields.
\newblock Cryptogr. Commun. \textbf{9}(4), 501--509 (2017).
\newblock \doi{10.1007/s12095-016-0189-2}.
\newblock \urlprefix\url{https://doi.org/10.1007/s12095-016-0189-2}

\bibitem{MW2021}
M\'erai, L., Winterhof, A.: Pseudorandom sequences derived from automatic
  sequences.
\newblock arXiv preprint arXiv:2105.03086  (2021)

\bibitem{morse1921}
Morse, H.M.: Recurrent geodesics on a surface of negative curvature.
\newblock Trans. Amer. Math. Soc. \textbf{22}(1), 84--100 (1921).
\newblock \doi{10.2307/1988844}.
\newblock \urlprefix\url{https://doi.org/10.2307/1988844}

\bibitem{moshe2007}
Moshe, Y.: On the subword complexity of {T}hue-{M}orse polynomial extractions.
\newblock Theoret. Comput. Sci. \textbf{389}(1-2), 318--329 (2007).
\newblock \doi{10.1016/j.tcs.2007.10.015}.
\newblock \urlprefix\url{https://doi.org/10.1016/j.tcs.2007.10.015}

\bibitem{niederreiterxing2014}
Niederreiter, H., Xing, C.: Sequences with high nonlinear complexity.
\newblock IEEE Trans. Inform. Theory \textbf{60}(10), 6696--6701 (2014).
\newblock \doi{10.1109/TIT.2014.2343225}.
\newblock \urlprefix\url{https://doi.org/10.1109/TIT.2014.2343225}

\bibitem{popoli2020}
Popoli, P.: {On the maximum order complexity of Thue-Morse and Rudin-Shapiro
  sequences along polynomial values}.
\newblock Unif. Distrib. Theory \textbf{15}(2), 9--22 (2020).
\newblock \doi{10.2478/udt-2020-0008}.
\newblock \urlprefix\url{https://doi.org/10.2478/udt-2020-0008}

\bibitem{prouhet1851}
Prouhet, E.: Mémoire sur quelques relations entre les puissances des nombres.
\newblock C. R. Acad. Sc. Paris \textbf{33}, 225 (1851)

\bibitem{queffelec1987}
Queff\'{e}lec, M.: Substitution dynamical systems---spectral analysis,
  \emph{Lecture Notes in Mathematics}, vol. 1294.
\newblock Springer-Verlag, Berlin (1987).
\newblock \doi{10.1007/BFb0081890}.
\newblock \urlprefix\url{https://doi.org/10.1007/BFb0081890}

\bibitem{shallit2020}
Shallit, J.: Subword complexity of the fibonacci-thue-morse sequence: the proof
  of {D}ekking's conjecture.
\newblock arXiv preprint arXiv:2010.10956  (2020)

\bibitem{Sh20}
Shutov, A.: On the sum of digits of the {Z}eckendorf representations of two
  consecutive numbers.
\newblock Fibonacci Quart. \textbf{58}(3), 203--207 (2020)

\bibitem{stoll2013}
Stoll, T.: Combinatorial constructions for the {Z}eckendorf sum of digits of
  polynomial values.
\newblock Ramanujan J. \textbf{32}(2), 227--243 (2013).
\newblock \doi{10.1007/s11139-012-9422-6}.
\newblock \urlprefix\url{https://doi.org/10.1007/s11139-012-9422-6}

\bibitem{sunwinterhof2019bis}
Sun, Z., Winterhof, A.: On the maximum order complexity of subsequences of the
  {T}hue-{M}orse and {R}udin-{S}hapiro sequence along squares.
\newblock Int. J. Comput. Math. Comput. Syst. Theory \textbf{4}(1), 30--36
  (2019).
\newblock \doi{10.1080/23799927.2019.1566275}.
\newblock \urlprefix\url{https://doi.org/10.1080/23799927.2019.1566275}

\bibitem{sunwinterhof2019}
Sun, Z., Winterhof, A.: On the maximum order complexity of the {T}hue-{M}orse
  and {R}udin-{S}hapiro sequence.
\newblock Unif. Distrib. Theory \textbf{14}(2), 33--42 (2019)

\bibitem{sunzenglin2020}
Sun, Z., Zeng, X., Lin, D.: On the {$N$}th maximum order complexity and the
  expansion complexity of a {R}udin-{S}hapiro-like sequence.
\newblock Cryptogr. Commun. \textbf{12}(3), 415--426 (2020).
\newblock \doi{10.1007/s12095-019-00396-0}.
\newblock \urlprefix\url{https://doi.org/10.1007/s12095-019-00396-0}

\bibitem{Thue1912}
Thue, A.: {\"U}ber die gegenseitige Lage gleicher Teile gewisser Zeichenreihen.
\newblock Skrifter udgivne af Videnskabsselskabet i Christiania.
  1,Math.Nat.wiss.Kl.1912,1. Jacob Dybwad (1912).
\newblock \urlprefix\url{https://books.google.fr/books?id=I0KFXwAACAAJ}

\bibitem{topuzogluwinterhof2007}
Topuzo\u{g}lu, A., Winterhof, A.: Pseudorandom sequences.
\newblock In: Topics in geometry, coding theory and cryptography, \emph{Algebr.
  Appl.}, vol.~6, pp. 135--166. Springer, Dordrecht (2007).
\newblock \doi{10.1007/1-4020-5334-4\_4}.
\newblock \urlprefix\url{https://doi.org/10.1007/1-4020-5334-4_4}

\bibitem{zeckendorf1972}
Zeckendorf, E.: Repr\'{e}sentation des nombres naturels par une somme de
  nombres de {F}ibonacci ou de nombres de {L}ucas.
\newblock Bull. Soc. Roy. Sci. Li\`ege \textbf{41}, 179--182 (1972)

\end{thebibliography}

\end{document}